\documentclass[11pt]{amsart}
\usepackage[utf8]{inputenc}
\usepackage{setspace, indentfirst}
\usepackage[margin=1in]{geometry}
\usepackage{cite, hyperref, comment, url}
\usepackage{tikz, graphicx, marvosym, subfigure, float}
\usetikzlibrary{decorations.pathreplacing}
\usepackage{dsfont, amsfonts, amssymb, amsmath, amsthm, enumitem, comment}
\usepackage{algorithm}
\usepackage[noend]{algpseudocode}
\numberwithin{equation}{section}
\newtheorem{theorem}{Theorem}[section]

\newtheorem{definition}[theorem]{Definition}
\newtheorem{lemma}[theorem]{Lemma}
\newtheorem{proposition}[theorem]{Proposition}
\newtheorem{corollary}[theorem]{Corollary}

\newtheorem{question}{Question}
\newcommand{\overbar}[1]{\mkern 1.7mu\overline{\mkern-1.7mu#1\mkern-1.7mu}\mkern 1.7mu}

\title{Chromatic Ramsey numbers and two-color Tur\'{a}n densities}

\author{Maria Axenovich}

\author{Simon Gaa}

\author{Dingyuan Liu}

\address{Karlsruhe Institute of Technology, Englerstraße 2, D-76131 Karlsruhe, Germany}

\email{maria.aksenovich@kit.edu, simon.gaa@kit.edu, liu@mathe.berlin}

\begin{document}
\begin{abstract}
Given a graph $G$, its $2$-color Tur\'{a}n number $\mathrm{ex}^{(2)}(n,G)$ is the maximum number of edges in an $n$-vertex graph, such that the edges can be colored with two colors avoiding a monochromatic copy of $G$. Let $\pi^{(2)}(G)=\lim_{n\to\infty}\mathrm{ex}^{(2)}(n,G)/\binom{n}{2}$ be the $2$-color Tur\'{a}n density of $G$. What real numbers in the interval $(0,1)$ are realized as the $2$-color Tur\'{a}n density of some graph? It is known that $\pi^{(2)}(G)=1-(R_{\chi}(G)-1)^{-1}$, where $R_{\chi}(G)$ is the chromatic Ramsey number of $G$. Burr, Erd\H{o}s, and Lov\'{a}sz showed that $(k-1)^2+1\leq{R_{\chi}(G)}\leq{R(k)}$, for any $k$-chromatic graph $G$, where $R(k)$ is the classical Ramsey number. However, it is an open problem to determine how many distinct values between $(k-1)^{2}+1$ and $R(k)$ can be realized as $R_{\chi}(G)$ of some $k$-chromatic graph $G$ for general $k$. In this paper, among others, we prove that there are $\Omega(k)$ different values of $R_{\chi}(G)$ among $k$-chromatic graphs $G$. This sheds more light onto the possible $2$-color Tur\'{a}n densities of graphs.
\end{abstract}
\maketitle

\section{Introduction}
Let $G$ be a graph containing at least one edge. The \textit{chromatic Ramsey number} $R_{\chi}(G)$ is defined as the smallest integer $N$, such that for some graph $F$ with $\chi(F)=N$, any $2$-coloring of the edges of $F$ results in a monochromatic copy of $G$. This notion was introduced by Burr, Erd\H{o}s, and Lov\'{a}sz~\cite{burr1976ramsey}, who further showed that if $\chi(G)=k$, then 
\begin{equation}
\label{equ1}
(k-1)^2+1\leq{R_{\chi}(G)}\leq{R(k)},
\end{equation}
where $R(k)$ is the classical Ramsey number. It is not difficult to see that the upper bound above can be attained by a clique on $k$ vertices. The lower bound is also attained by a graph given by Zhu~\cite{zhu2011fractional}.

The chromatic Ramsey number is closely related to the \textit{$2$-color Tur\'{a}n density} of $G$, defined as \[\pi^{(2)}(G)=\lim_{n\to\infty}\mathrm{ex}^{(2)}(n,G)/\binom{n}{2},\] where $\mathrm{ex}^{(2)}(n,G)$ is the maximum number of edges in an $n$-vertex graph, such that the edges can be colored with two colors avoiding a monochromatic copy of $G$. As the ordinary Tur\'{a}n density of $G$ is expressible in terms of its chromatic number $\chi(G)$, the function $\pi^{(2)}(G)$ is determined by the chromatic Ramsey number of $G$. The correspondence between $\pi^{(2)}(G)$ and $R_{\chi}(G)$ was established for cliques by Bialostocki, Caro, and Roditty~\cite{bialo1990zero} and for general graphs by Hancock, Staden, and Treglown~\cite{hancock2019ramsey}, who observed that
\begin{equation*}
\pi^{(2)}(G)=1-(R_{\chi}(G)-1)^{-1}.
\end{equation*}
This implies that determining $2$-color Tur\'{a}n densities boils down to understanding chromatic Ramsey numbers. Nevertheless, current knowledge of chromatic Ramsey numbers remains rather limited. It is even unclear how many distinct values between $(k-1)^{2}+1$ and $R(k)$ can be realized as $R_{\chi}(G)$ of some $k$-chromatic graph $G$ for general $k$. Here, we make the first attempt in this direction by providing the following lower bound on the number of such values.
\begin{theorem}
\label{cor1}
$\big\lvert\left\{R_{\chi}(G):\,\chi(G)=k\right\}\big\rvert=\Omega(k)$.
\end{theorem}

We also study $\left\{R_{\chi}(G):\,\chi(G)=k\right\}$ for small values of $k$. It is immediate from~\eqref{equ1} that $R_{\chi}(G)=2$ when $\chi(G)=2$ and $R_{\chi}(G)\in\{5,6\}$ when $\chi(G)=3$. When $\chi(G)=4$, we have that $10\leq{R_{\chi}(G)}\leq18$. However, it is unknown whether every integer between $10$ and $18$ is realizable as $R_{\chi}(G)$ for some $4$-chromatic graph $G$. Paul and Tardif~\cite{paul2012ramsey} showed that the odd wheels are $4$-chromatic graphs with chromatic Ramsey number $14$. Later, more $4$-chromatic graphs $G$ with $R_{\chi}(G)=14$ were found by Tardif~\cite{tardif2023ramsey}. Employing a computer-assisted proof, we find a new value realizable as the chromatic Ramsey numbers of $4$-chromatic graphs.
\begin{theorem}
\label{thm3}
There is a graph $G$ with $\chi(G)=4$ and $R_{\chi}(G)=11$.
\end{theorem}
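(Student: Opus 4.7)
The plan is computer-assisted: I will exhibit a specific $4$-chromatic graph $G$ and verify $R_{\chi}(G)=11$ by proving matching upper and lower bounds separately. The starting point will be the homomorphism reformulation of Burr, Erd\H{o}s, and Lov\'{a}sz\cite{burr1976ramsey}: $R_{\chi}(G)\leqslant N$ if and only if, for every partition of the edges of $K_N$ into two subgraphs $C_1,C_2$, there is a graph homomorphism $G\to C_i$ for some $i\in\{1,2\}$. This converts the chromatic Ramsey question into two pure homomorphism-arrow problems, one on $K_{10}$ and one on $K_{11}$, both amenable to finite search.

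First, I would locate the candidate $G$ by a computer search over small $4$-chromatic graphs---for instance $4$-critical graphs of low order, or graphs built from wheel- and Mycielski-type constructions---using a homomorphism solver to estimate $R_{\chi}$ for each candidate. The target is a graph that is flexible enough to admit a homomorphism into ``most'' $11$-vertex host graphs but rigid enough that some specific $2$-coloring of $K_{10}$ blocks it from both color classes. Once a suitable $G$ has been found, I would establish the lower bound $R_{\chi}(G)\geqslant 11$ by displaying an explicit $2$-edge-coloring $K_{10}=C_1\cup C_2$ in which neither color class admits a homomorphism from $G$; each non-homomorphism $G\not\to C_i$ is a finite CSP verifiable by backtracking, and the coloring itself doubles as a short, human-checkable certificate.

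The hard part will be the upper bound $R_{\chi}(G)\leqslant 11$, i.e.\ showing that every $2$-coloring of $E(K_{11})$ satisfies $G\to C_1$ or $G\to C_2$. A naive enumeration of all $2^{\binom{11}{2}}=2^{55}$ colorings is hopeless, so I would prune the search by: (i) generating one representative per isomorphism class of $11$-vertex graphs $C_1$ via canonical-form enumeration (e.g.\ with \texttt{nauty}); (ii) exploiting the symmetry between the two colors to restrict to $C_1$ with at most $\lfloor\binom{11}{2}/2\rfloor$ edges; and (iii) for each such $C_1$ testing $G\to C_1$ first, falling back to $G\to K_{11}\setminus E(C_1)$ only when the first test fails, with aggressive backtracking inside each hom test. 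The most delicate point will be the correctness and reproducibility of this exhaustive search; I plan to mitigate it by publishing the explicit lower-bound coloring together with a precise algorithmic specification that an independent party can rerun.
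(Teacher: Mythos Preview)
Your plan is correct and aligned with the paper's approach: both use the Burr--Erd\H{o}s--Lov\'asz identity $R_\chi(G)=R(\mathrm{Hom}(G))$ and a computer-assisted verification of matching upper and lower bounds. The paper takes $G$ to be the Moser spindle $M$, for which the minimal homomorphic images are $\mathrm{Hom}'(M)=\{K_4,W_5,M\}$, so each homomorphism test collapses to at most three subgraph-isomorphism queries. The one substantive idea you are missing is in the upper-bound search: you propose enumerating all isomorphism classes of $11$-vertex graphs via \texttt{nauty}, of which there are about $10^9$, whereas the paper observes that $K_4\in\mathrm{Hom}(G)$ for every $4$-chromatic $G$, so any $2$-coloring of $K_n$ containing a monochromatic $K_4$ is already settled and only Ramsey$(4,4,n)$-graphs --- colorings where neither colour class contains a $K_4$ --- need be examined. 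These have been completely catalogued by McKay (a few million in total across $n\in\{10,\dots,17\}$), which turns the exhaustive check from a borderline-feasible enumeration into a short loop over a precomputed database.
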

Theorem~\ref{thm3} and the aforementioned results establish that
\begin{equation*}
\{10,11,14,18\}\subseteq\{R_{\chi}(G):\,\chi(G)=4\}\subseteq\{10,11,\dots,18\}.
\end{equation*}

Recall that the upper bound in~\eqref{equ1} can be attained by $G=K_{k}$. One might expect that every $k$-chromatic graph $G$ with $R_{\chi}(G)=R(k)$ contains a dense $k$-chromatic subgraph. However, our next result shows that there are $k$-chromatic graphs with arbitrarily large girth and chromatic Ramsey number $R(k)$. This establishes an analogue of the classical result by Erd\H{o}s~\cite{erdos1959girth} on graphs with high chromatic number and high girth. Since an $n$-vertex graph of girth $g$ contains at most $n^{1+2/(g-1)}$ edges (see, e.g., F\"uredi and Simonovits~\cite[Theorems 4.1 and 4.4]{furedi2013girth}), Theorem~\ref{thm2} shows that there are $k$-chromatic graphs with chromatic Ramsey number $R(k)$, which are sparse everywhere.
\begin{theorem}
\label{thm2}
For any $k,g\in\mathbb{N}$, there exists a graph $G$ with $\chi(G)=k$ and girth at least $g$, such that $R_{\chi}(G)=R(k)$.
\end{theorem}

Our main Theorem~\ref{cor1} is an immediate consequence of the following more precise result on the distribution of values of $R_{\chi}(G)$ in the interval $\{(k-1)^2+1,\dots,R(k)\}$. Throughout this paper, all logarithms are taken base $2$.
\begin{theorem}
\label{thm4}
There is an absolute constant $C>0$ such that the following holds. For every integer $k\geq2$ and any $\gamma$ satisfying $(k-1)^{2}+1\leq\gamma\leq{R(k)}$, there is a graph $G$ with $\chi(G)=k$, such that
\begin{equation*}
C\gamma\cdot\max\left\{\frac{\log{\gamma}}{k\log{k}},\,\left(\frac{\log{\gamma}}{k}\right)^{2}\right\}<R_{\chi}(G)\leq\gamma.
\end{equation*}
\end{theorem}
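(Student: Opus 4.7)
The plan is to construct, for each $\gamma$ in the interval $[(k-1)^2+1, R(k)]$, a $k$-chromatic graph $G_\gamma$ whose chromatic Ramsey number lies in the required window $(C\gamma \cdot M(\gamma,k),\,\gamma]$, where $M(\gamma,k)$ abbreviates the maximum in the theorem statement. A natural blueprint is to choose $G_\gamma$ so that it contains a clique $K_j$ of a carefully calibrated size $j=j(\gamma,k)$ while still having chromatic number exactly $k$; for instance, $G_\gamma$ could be a graph join $K_j \vee H$, where $H$ is a $(k-j)$-chromatic graph of small clique number (such as an iterated Mycielski graph).

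For the upper bound $R_{\chi}(G_\gamma) \leq \gamma$, I would exhibit an explicit graph $F$ with $\chi(F)=\gamma$ and $F \to (G_\gamma)_2$, meaning every $2$-coloring of the edges of $F$ contains a monochromatic copy of $G_\gamma$. A natural candidate is $F=K_\gamma$ or a carefully chosen complete multipartite graph of chromatic number $\gamma$, and the verification would combine Ramsey's theorem (to locate a monochromatic $K_j$ in any $2$-coloring of $F$, which requires $R(j)\leq\gamma$) with an embedding step extending the monochromatic clique to a monochromatic copy of the gadget $H$.

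For the lower bound, I would exploit the subgraph monotonicity of $R_\chi$: if $H \subseteq G$ then $R_\chi(H) \leq R_\chi(G)$, so $R_\chi(G_\gamma) \geq R_\chi(K_j) = R(j)$. Combining this with the Erd\H{o}s lower bound $R(j) = \Omega(\sqrt{j}\cdot 2^{j/2})$ and calibrating $j$ so that $R(j)$ approaches $\gamma$ from below should match the theorem's $\max$ expression. The two terms in the $\max$ appear to correspond to two regimes of $\gamma$: the term $\gamma\log\gamma/(k\log k)$ dominates when $\gamma$ is nearer the lower endpoint $(k-1)^2+1$, while $\gamma(\log\gamma/k)^2$ dominates when $\gamma$ is nearer $R(k)$, reflecting the different asymptotic behaviors of $R(j)$ at small and large $j$ together with the Erd\H{o}s--Szekeres upper bound $R(j)=O(4^j)$.

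The principal obstacle is matching the precise form of the lower bound near the upper end of the interval. A direct subgraph-clique argument with $j \leq \log_4\gamma$ gives only $R(j) \geq 2^{j/2} = \Omega(\gamma^{1/4})$, which is far weaker than the claimed $\Omega(\gamma)$ near $\gamma=R(k)$. Closing this gap likely requires either taking $j$ close to $k$ --- with a delicate construction ensuring $\omega(G_\gamma)=j<k$ while $\chi(G_\gamma)=k$, so that $K_k \not\subseteq G_\gamma$ keeps $R_\chi(G_\gamma)<R(k)$ --- or supplementing the subgraph argument with a density/counting argument that bounds $\chi(F)$ from below for any $F \to (G_\gamma)_2$, say by controlling the number of distinct copies of $G_\gamma$ that $F$ must contain. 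Balancing the upper and lower bounds simultaneously across the full interval of $\gamma$ is the crux of the proof.
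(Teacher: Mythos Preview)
Your construction $G_\gamma = K_j \vee H$ is the right shape (the paper also uses joins of cliques with low-clique graphs), but the lower-bound mechanism via $R_\chi(G_\gamma) \geq R(j)$ is, as you yourself diagnose, too weak to close the argument. The gap you identify at the end is genuine and is precisely where the content of the proof lies.

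The paper avoids calibrating $j$ to $\gamma$ directly. Instead it defines the entire interpolating sequence $G_i = K_{k-i} \vee Z_{i,n}$ for $i \in [k]$ (with $Z_{i,n}$ a generalized Zhu graph), observes that $R_\chi(G_k) = (k-1)^2+1$ and $R_\chi(G_1) \geq R(k)$, and picks the smallest $\ell$ with $R_\chi(G_\ell) \leq \gamma$. The lower bound then comes not from clique containment but from bounding the \emph{ratio} $R_\chi(G_{\ell-1})/R_\chi(G_\ell)$: since $R_\chi(G_{\ell-1}) > \gamma$, it suffices to show this ratio is at most $C_2 \cdot \min\{k\log k/\log\gamma,\,(k/\log\gamma)^2\}$. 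That ratio bound is the paper's Proposition~\ref{pro1} (chromatic Ramsey number upon vertex deletion), proved via the Erd\H{o}s--Szemer\'edi theorem on unbalanced $2$-colorings of $K_N$. This is the ingredient your sketch is missing entirely: a quantitative ``$R_\chi$ cannot drop too much when one vertex is removed'' lemma.

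There is a second subtlety: going from $G_{\ell-1}$ to $G_\ell$ is not literally deleting a vertex, since the Zhu component changes from $Z_{\ell-1,n}$ to $Z_{\ell,n}$. The paper handles this by choosing Zhu graphs (rather than Mycielski graphs) precisely because they satisfy $\mathrm{Hom}(Z_{\ell,n}) \subseteq \mathrm{Hom}(Z_{\ell-1,n})$, which yields $R_\chi(K_{k-\ell} \vee Z_{\ell-1,n}) \leq R_\chi(K_{k-\ell} \vee Z_{\ell,n}) = R_\chi(G_\ell)$. Your Mycielski suggestion would not obviously provide this monotonicity, so the specific choice of gadget matters.
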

Note that when $\gamma$ is exponential in $k$, the upper and lower bounds in Theorem~\ref{thm4} are off by a multiplicative constant. Hence, Theorem~\ref{cor1} simply follows from the lower bound $R(k)\geq 2^{k/2}$ by Erd\H{o}s~\cite{erdos1947ramsey}.

The organization of this paper is as follows. In Section~\ref{preliminaries} we give necessary definitions and state basic properties of graph homomorphisms, chromatic Ramsey numbers, and fractional chromatic numbers. In Section~\ref{growth} we study the change of the Ramsey chromatic number when deleting a vertex, which could be of independent interest. Section~\ref{proof1} is devoted to proving Theorem~\ref{thm4} and Theorem~\ref{cor1}. In Sections~\ref{proof3} and~\ref{proof2} we establish Theorem~\ref{thm3} and Theorem~\ref{thm2}, respectively. Section~\ref{conclusions} includes some remarks and open problems.

\section{Definitions and preliminary results}
\label{preliminaries}
\subsection{General definitions, homomorphisms}
All graphs considered in this paper are simple and finite. Given a graph $G$, we denote its vertex set and edge set by $V(G)$ and $E(G)$, respectively. Let $\lvert{G}\rvert$ denote the number of vertices of $G$ and $\lVert{G}\rVert$ denote the number of edges. The \textit{girth} of a graph $G$, denoted by $\textsl{g}(G)$, is the shortest length of a cycle in $G$. Note that $\textsl{g}(G)=\infty$ if $G$ contains no cycle. The \textit{chromatic number} $\chi(G)$ of a graph $G$ is the smallest integer $k$, such that one can color the vertices of $G$ with $k$ colors so that every two adjacent vertices obtain different colors. For convenience, we say that $G$ is \textit{$k$-chromatic} if $\chi(G)=k$. We also abbreviate a coloring with $k$ colors as a \textit{$k$-coloring}.

The \textit{complement} $\overbar{G}$ of $G$ is the graph on the vertex set $V(G)$, whose edge set is $\binom{V(G)}{2}\backslash{E(G)}$. We say that a graph is a \textit{copy} of $G$ if it can be obtained by relabeling the vertices of $G$. A \textit{clique} is a graph, in which every two distinct vertices are adjacent. The \textit{clique number} $\omega(G)$ is the number of vertices of a largest clique contained in $G$. For any $n\in\mathbb{N}$, we let $K_{n}$ denote the clique on the vertex set $[n]:=\{1,\dots,n\}$. The \textit{classical Ramsey number} $R(k)$ is the smallest integer $N$ such that any $2$-coloring of the edges of $K_{N}$ results in a monochromatic copy of $K_{k}$.

The \textit{tensor product} $G\otimes{H}$ of two graphs $G$ and $H$ is defined as a graph on the vertex set $V(G)\times{V(H)}$, where two vertices $(u,v),(u',v')\in{V(G\otimes{H})}$ are adjacent if and only if $uu'\in{E(G)}$ and $vv'\in{E(H)}$.

The \textit{join} $G\vee{H}$ of two graphs $G$ and $H$ is a graph obtained by taking vertex-disjoint copies of $G$ and $H$ and adding an edge between every vertex of the copy of $G$ and every vertex of the copy of $H$. It is easy to see that $\chi(G\vee{H})=\chi(G)+\chi(H)$.

A \textit{homomorphism} from a graph $G$ to a graph $H$ is a mapping $f:V(G)\to{V(H)}$, such that $f(u)f(v)\in{E(H)}$ whenever $uv\in{E(G)}$. We say in this case that $H$ is a \textit{homomorphic image} of $G$. The \textit{homomorphic class} $\mathrm{Hom}(G)$ is defined as the family of all homomorphic images of $G$. For other graph theoretic notions we refer to the book by Diestel~\cite{diestel2017graph}.

We list some useful properties of graph homomorphisms and tensor products. All the properties here are basic and can be easily derived from their definitions. Let $G$ and $H$ be graphs, then

\vspace{1em}\begin{tabular}{ll}\vspace{1em}
\textbf{(P1)} $K_{\chi(G)}\in\mathrm{Hom}(G)$; & \textbf{(P2)} if $H\in\mathrm{Hom}(G')$ and $G'\in\mathrm{Hom}(G)$, then $H\in\mathrm{Hom}(G)$;\\
\textbf{(P3)} $\{G,H\}\subseteq\mathrm{Hom}(G\otimes{H})$; & \textbf{(P4)} $\chi(G\otimes{H})\leq\min\{\chi(G),\chi(H)\}$.
\end{tabular}\vspace{1em}

\subsection{$\boldsymbol{R_{\chi}(G)}$ and $\boldsymbol{R\left(\mathrm{Hom}(G)\right)}$}
\label{ramsey}
Let $\mathcal{F}$ be a family of graphs and define $R(\mathcal{F})$ as the smallest integer $N$, such that any $2$-coloring of $E(K_{N})$ results in a monochromatic copy of a graph from $\mathcal{F}$. Burr, Erd\H{o}s, and Lov\'{a}sz~\cite{burr1976ramsey} proved that for any graph $G$, the values of $R_{\chi}(G)$ and $R\left(\mathrm{Hom}(G)\right)$ actually coincide. Recall that $\mathrm{Hom}(G)$ is the homomorphic class of $G$.
\begin{proposition}[Burr--Erd\H{o}s--Lov\'{a}sz~\cite{burr1976ramsey}]
\label{pro2}
$R_{\chi}(G)=R\left(\mathrm{Hom}(G)\right)$ for all graphs $G$.
\end{proposition}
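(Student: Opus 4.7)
I would show the two inequalities $R_\chi(G) \geq R(\mathrm{Hom}(G))$ and $R_\chi(G) \leq R(\mathrm{Hom}(G))$ separately, both built on the observation that any graph homomorphism transports edge $2$-colorings while preserving monochromaticity of subgraph-copies.

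\emph{Direction} $R_\chi(G) \geq R(\mathrm{Hom}(G))$. Let $F$ realize the chromatic Ramsey number, so $\chi(F) = R_\chi(G) =: M$ and every $2$-coloring of $E(F)$ contains a monochromatic copy of $G$. A proper $M$-coloring of $F$ is precisely a homomorphism $\phi \colon F \to K_M$, and fixing any such $\phi$, each $2$-coloring $c$ of $E(K_M)$ pulls back to $\tilde c(uv) := c(\phi(u)\phi(v))$ on $E(F)$. By assumption $\tilde c$ admits a monochromatic copy $G_0 \subseteq F$ of $G$; its image $\phi(G_0)$ is a homomorphic image of $G$ inside $K_M$ whose edges are all the same color under $c$. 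Hence every $2$-coloring of $E(K_M)$ produces a monochromatic $H \in \mathrm{Hom}(G)$, giving $R(\mathrm{Hom}(G)) \leq M$.

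\emph{Direction} $R_\chi(G) \leq R(\mathrm{Hom}(G))$. Set $N := R(\mathrm{Hom}(G))$. I would take $F$ to be the complete $N$-partite graph with parts $P_1, \ldots, P_N$ of size $m$ each for $m$ sufficiently large; then $\chi(F) = N$. Given a $2$-coloring $c$ of $E(F)$, an iterated bipartite Ramsey argument across the $\binom{N}{2}$ inter-part bipartite subgraphs yields subsets $A_i \subseteq P_i$, each of size at least $\lvert G \rvert$, such that for every $i \neq j$ the bipartite graph between $A_i$ and $A_j$ is monochromatic under $c$ in some color $c^{\ast}(ij)$. The induced $2$-coloring $c^{\ast}$ of $E(K_N)$ must contain a monochromatic $H \in \mathrm{Hom}(G)$ by the definition of $N$; composing a homomorphism $G \to H$ (landing in this monochromatic copy of $H$ in $K_N$) with a placement of $V(G)$ into the chosen $A_i$'s that is injective on each fibre of $G \to H$ produces a genuine monochromatic copy of $G$ in $F$.

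\emph{Main obstacle.} The crucial step is the lifting in the second direction: promoting a monochromatic homomorphic image $H$ of $G$ to a vertex-injective monochromatic copy of $G$. Because the homomorphism $G \to H$ can collapse several vertices of $G$ onto the same vertex of $H$, one needs the blow-up parameter $m$ to supply enough room within each color class for distinct vertices of $G$; ensuring the iterated bipartite Ramsey extraction leaves each $A_i$ of size at least $\lvert G \rvert$ is what drives the need for $m$ large. Standard quantitative bounds on the bipartite Ramsey number make this possible, so the construction goes through and gives $R_\chi(G) \leq \chi(F) = N$.
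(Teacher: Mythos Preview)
The paper does not include its own proof of this proposition; it is stated with attribution to Burr, Erd\H{o}s, and Lov\'{a}sz and then used as a black box. Your argument is correct and is in fact the classical one: the inequality $R_\chi(G)\geqslant R(\mathrm{Hom}(G))$ via pulling back a $2$-coloring of $K_M$ along a proper $M$-coloring of the witnessing graph $F$, and the inequality $R_\chi(G)\leqslant R(\mathrm{Hom}(G))$ via a large complete $N$-partite blow-up combined with iterated bipartite Ramsey to make all inter-part colorings constant, followed by lifting the monochromatic $H\in\mathrm{Hom}(G)$ to an injective copy of $G$ using that each part retains at least $\lvert G\rvert$ vertices.
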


\subsection{Fractional chromatic numbers}
There are several equivalent definitions of the fractional chromatic number, here we follow the one by Godsil and Royle~\cite{godsil2001algebraic}. We say that a set $I\subseteq{V(G)}$ is \textit{independent} if $I$ contains no two adjacent vertices. The \textit{independence number} $\alpha(G)$ is defined as the largest size of an independent set in $G$. Let $\mathcal{I}(G)$ denote the family of all independent sets in $G$. For $v\in{V(G)}$, let $\mathcal{I}_{v}(G)$ denote the family of all independent sets in $G$ containing $v$. A \textit{fractional coloring} of $G$ is a mapping $c_{f}$ from $\mathcal{I}(G)$ to $\mathbb{R}_{\geq0}$, such that for every $v\in{V(G)}$ we have $\sum_{I\in\mathcal{I}_{v}(G)}c_{f}(I)\geq1$. The \textit{fractional chromatic number} $\chi_{f}(G)$ of a graph $G$ is defined as the minimum value of $\sum_{I\in\mathcal{I}(G)}c_{f}(I)$ among all fractional colorings $c_{f}$ of $G$.

The following property of fractional chromatic numbers has been extensively mentioned in the literature, see, e.g.,~\cite{godsil2001algebraic}.
\begin{lemma}
\label{lem1}
Let $G$ be a graph. Then
\begin{equation*}
\frac{\lvert{G}\rvert}{\alpha(G)}\leq\chi_{f}(G)\leq\chi(G).
\end{equation*}
\end{lemma}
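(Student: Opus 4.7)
The plan is to handle the two inequalities separately, since they come from essentially different ideas: the upper bound is a direct construction, while the lower bound uses a double-counting argument.

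For the upper bound $\chi_{f}(G)\leqslant\chi(G)$, I would start from any proper vertex coloring of $G$ with $\chi(G)$ colors, whose color classes are independent sets $I_{1},\dots,I_{\chi(G)}\in\mathcal{I}(G)$. Defining the fractional coloring $c_{f}$ by $c_{f}(I_{j})=1$ for $j\in[\chi(G)]$ and $c_{f}(I)=0$ on all remaining independent sets immediately gives $\sum_{I\in\mathcal{I}_{v}(G)}c_{f}(I)=1$ for every $v\in V(G)$ (since each $v$ lies in exactly one color class), so $c_{f}$ is a valid fractional coloring of total weight $\chi(G)$. This is the one-line part and should not present any difficulty.

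For the lower bound $|G|/\alpha(G)\leqslant\chi_{f}(G)$, I would fix an arbitrary fractional coloring $c_{f}$ of $G$ and double-count the quantity $\sum_{I\in\mathcal{I}(G)}c_{f}(I)\,|I|$. On the one hand, swapping the order of summation gives
\begin{equation*}
\sum_{I\in\mathcal{I}(G)}c_{f}(I)\,|I|=\sum_{v\in V(G)}\sum_{I\in\mathcal{I}_{v}(G)}c_{f}(I)\geqslant|G|,
\end{equation*}
where the inequality is the defining property of a fractional coloring. On the other hand, since every independent set satisfies $|I|\leqslant\alpha(G)$, we have
\begin{equation*}
\sum_{I\in\mathcal{I}(G)}c_{f}(I)\,|I|\leqslant\alpha(G)\sum_{I\in\mathcal{I}(G)}c_{f}(I).
\end{equation*}
Combining the two estimates yields $\sum_{I\in\mathcal{I}(G)}c_{f}(I)\geqslant|G|/\alpha(G)$, and taking the infimum over $c_{f}$ gives the desired bound.

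There is no real obstacle here; both directions are standard. The only thing to be a bit careful about is making sure the definition of $\chi_{f}(G)$ is being applied in the right direction — the upper bound requires exhibiting one fractional coloring, while the lower bound requires a statement that holds for every fractional coloring — but this is purely bookkeeping. The proof as sketched is self-contained and uses only the definitions of $\chi(G)$, $\alpha(G)$, and the fractional coloring given in the preceding paragraph.
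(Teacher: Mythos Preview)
Your proof is correct and is precisely the standard argument. Note that the paper itself does not supply a proof of this lemma at all: it simply states the inequalities and refers the reader to Godsil and Royle~\cite{godsil2001algebraic}, so there is no in-paper proof to compare against. Your write-up is exactly what one would find there, and both directions are handled cleanly; the only cosmetic point is that the minimum in the definition of $\chi_{f}(G)$ is attained (it is a finite linear program), so ``infimum'' can safely be replaced by ``minimum''.
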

In particular, Lemma~\ref{lem1} yields that $\chi_{f}(K_{n})=n$ for all $n\in\mathbb{N}$.

A long-standing conjecture of Hedetniemi~\cite{hedetniemi1966conjecture} claimed that $\chi(G\otimes{H})=\min\left\{\chi(G),\chi(H)\right\}$ for any graphs $G$ and $H$. This conjecture was recently disproved by Shitov~\cite{shitov2019counter}. Nevertheless, Zhu~\cite{zhu2011fractional} showed that the fractional version of Hedetniemi’s conjecture is true.
\begin{proposition}[Zhu~\cite{zhu2011fractional}]
\label{pro3}
Let $G$ and $H$ be graphs. Then
\begin{equation*}
\chi_{f}(G\otimes{H})=\min\left\{\chi_{f}(G),\chi_{f}(H)\right\}.
\end{equation*}
\end{proposition}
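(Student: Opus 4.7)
The statement $\chi_f(G\otimes H)=\min\{\chi_f(G),\chi_f(H)\}$ naturally splits into an easy upper bound and a hard lower bound, and I would attack them separately.

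For the upper bound $\chi_f(G\otimes H)\le\min\{\chi_f(G),\chi_f(H)\}$, the plan is to use homomorphism-monotonicity. The coordinate projection $\pi_G\colon V(G\otimes H)\to V(G)$, $(u,v)\mapsto u$, is a graph homomorphism: by the definition of the tensor product, if $(u,v)(u',v')\in E(G\otimes H)$ then $uu'\in E(G)$, so $\pi_G$ sends edges to edges. A standard fact (which I would record as a preliminary remark and prove in one line from the definition of $\chi_f$) is that any homomorphism $\phi\colon X\to Y$ pulls back fractional colorings: if $\{c(I)\}_{I\in\mathcal I(Y)}$ is a fractional coloring of $Y$, then $\{c(I)\}$ assigned to the preimages $\phi^{-1}(I)\in\mathcal I(X)$ gives a fractional coloring of $X$ with the same total weight, so $\chi_f(X)\le\chi_f(Y)$. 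Applying this to $\pi_G$ and the analogous projection $\pi_H$ yields the upper bound.

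For the lower bound $\chi_f(G\otimes H)\ge\min\{\chi_f(G),\chi_f(H)\}$, my plan is to pass to the LP dual. The fractional chromatic number has the equivalent description
\begin{equation*}
\chi_f(F)=\max\Bigl\{\sum_{v\in V(F)} w(v)\;:\;w\ge 0,\ \sum_{v\in I}w(v)\le 1\text{ for every }I\in\mathcal I(F)\Bigr\}.
\end{equation*}
Assume WLOG $p:=\chi_f(G)\le q:=\chi_f(H)$, and fix optimal dual weights $w_G$ on $V(G)$ and $w_H$ on $V(H)$ with $\sum w_G=p$ and $\sum w_H=q$. I would then try the product weighting $w(u,v):=w_G(u)w_H(v)/q$ on $V(G\otimes H)$, which has total mass $pq/q=p$. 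If $w$ is dual-feasible for $G\otimes H$, LP-duality gives $\chi_f(G\otimes H)\ge p$, as desired. Feasibility amounts to showing that for every independent $J\subseteq V(G\otimes H)$,
\begin{equation*}
\sum_{(u,v)\in J} w_G(u)\,w_H(v)\;\le\;q.
\end{equation*}

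The main obstacle is proving this last inequality, which is really the combinatorial heart of Zhu's theorem. The structural fact to exploit is that, writing $J_u=\{v:(u,v)\in J\}\subseteq V(H)$, whenever $uu'\in E(G)$ there is no $H$-edge between $J_u$ and $J_{u'}$ (otherwise $J$ would contain an edge of $G\otimes H$). My plan would be to use this ``no-crossing-edge'' structure together with the fractional clique $w_H$ on $H$ via an averaging/polyhedral argument: either randomly split $V(H)$ according to a distribution associated with $w_H$ so that each slice $J_u$ concentrates in a random independent set of $H$, or reduce to extreme points of the relevant polytope where each slice lies inside a genuine independent set of $H$ and the bound follows from $\sum_u w_G(u)=p\le q$. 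I expect this step to be delicate — it is exactly where Hedetniemi's original conjecture fails in the integer setting and where Zhu's fractional argument succeeds — so in practice I would either mirror Zhu's polyhedral/LP-duality proof in \cite{zhu2011fractional} for this step, or, given that the paper uses the statement only as a black box, cite Zhu directly.
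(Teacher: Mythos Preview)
The paper does not supply a proof of this proposition --- it is stated as Zhu's theorem with a citation to \cite{zhu2011fractional} and used as a black box. Your final sentence already anticipates this.

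Your upper-bound argument is correct: the coordinate projections are homomorphisms from $G\otimes H$, and $\chi_f$ is monotone under homomorphisms for exactly the reason you give (pulling back a fractional coloring along a homomorphism preserves feasibility and total weight).

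For the lower bound, the LP-dual reformulation is sound, and the feasibility inequality you isolate --- that $\sum_{(u,v)\in J} w_G(u)\,w_H(v)\le \chi_f(H)$ for every independent set $J$ of $G\otimes H$ --- is genuinely equivalent to the hard direction of the theorem. However, you do not actually prove it: the two strategies you float (randomly splitting $V(H)$ along $w_H$, or passing to extreme points of some polytope) are gestures rather than arguments, and neither is obviously workable without substantial further ideas. In particular, the slices $J_u$ need not be independent in $H$, so the ``extreme point'' picture in which each slice sits inside a single independent set of $H$ does not arise directly; and the no-crossing-edge condition between $J_u$ and $J_{u'}$ for $uu'\in E(G)$ is weaker than what a naive averaging would need. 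This is precisely the step where the integer version (Hedetniemi) fails, as you note, so one should not expect a soft argument to close it. As written, then, the hard direction of your proposal is incomplete. Since the paper itself simply cites Zhu for the entire statement, your fallback of citing \cite{zhu2011fractional} directly is exactly what the paper does, and there is nothing further to compare.
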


\section{Chromatic Ramsey numbers upon vertex deletion}
\label{growth}
Let $G$ be a graph containing at least one edge and $F$ be a graph obtained by deleting a vertex from $G$. An open problem in Ramsey theory is to understand how the Ramsey number of $F$ changes relative to the Ramsey number of $G$. In terms of classical Ramsey numbers, Conlon, Fox, and Sudakov~\cite{conlon2020short} showed that $R(F)\geq\Omega(d/\log(1/d))R(G)$, where $d=d(G):=\lVert{G}\rVert/\binom{\lvert{G}\rvert}{2}$ denotes the density of $G$. This in particular implies that for any sufficiently dense $G$, the value of $R(F)$ is at least a constant fraction of $R(G)$. Furthermore, Wigderson~\cite{wigderson2024ramsey} recently obtained that $R(F)\geq\Omega(1/\sqrt{\lvert{G}\rvert\log{\lvert{G}\rvert}})R(G)$, which is a better bound when $G$ is sparse. In this section we prove analogous results regarding chromatic Ramsey numbers.
\begin{proposition}
\label{pro1}
There is an absolute constant $C_{1}>0$ such that the following holds. Let $G$ be a graph containing at least one edge and $F$ be a graph obtained by deleting a vertex from $G$. For any $\delta\geq2$ with
\begin{equation*}
\frac{\delta}{\log{\delta}}\geq\frac{\chi(G)}{\log{R_{\chi}(G)}},
\end{equation*}
we have 
\begin{equation*}
R_{\chi}(F)\geq\frac{R_{\chi}(G)}{C_{1}\delta}.
\end{equation*}
\end{proposition}

To prove Proposition~\ref{pro1}, we make use of a result of Erd\H{o}s and Szemer\'{e}di~\cite{erdos1972ramsey}. Let $\epsilon\in(0,1/2]$ and $G$ be a graph. We say that a coloring of $E(G)$ is \textit{$\epsilon$-balanced} if the number of edges of each color is at least $\epsilon\lVert{G}\rVert$.
\begin{lemma}[Erd\H{o}s--Szemer\'{e}di~\cite{erdos1972ramsey}]
\label{lem2}
There is an absolute constant $A>0$ such that the following holds for any $\epsilon\in(0,1/2]$ and every integer $N\geq1/\epsilon$. If a $2$-coloring of $E(K_{N})$ is not $\epsilon$-balanced, then there exists a monochromatic copy of $K_{n}$ with $n\geq\frac{A\log{N}}{\epsilon\log(1/\epsilon)}$.
\end{lemma}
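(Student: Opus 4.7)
The plan is to construct a large monochromatic clique by iteratively restricting to neighborhoods in the majority color, using the sparsity of the minority color to keep the candidate set from shrinking too fast. Without loss of generality I assume the ``red'' color class contains at most $\epsilon\binom{N}{2}$ edges, so at least $(1-\epsilon)\binom{N}{2}$ edges are ``blue''; the target is a blue clique of size roughly $A\log N/(\epsilon\log(1/\epsilon))$.

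First I set up the greedy iteration. I produce nested sets $V_{0}\supseteq V_{1}\supseteq\cdots$ and vertices $v_{i}\in V_{i-1}$ so that $V_{i}$ consists of the blue neighbors of $v_{i}$ inside $V_{i-1}$; the $v_{i}$ then form a blue clique. Starting from $V_{0}=V(K_{N})$, at step $i$ I pick $v_{i}\in V_{i-1}$ whose red degree into $V_{i-1}$ is at most the average value $p_{i-1}(|V_{i-1}|-1)$, where $p_{i-1}$ denotes the red density inside $V_{i-1}$. This gives
\[
|V_{i}|\geqslant(1-p_{i-1})(|V_{i-1}|-1).
\]
The number of red edges inside $V_{i}$ is monotone non-increasing and bounded throughout by $\epsilon\binom{N}{2}$.

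The analysis splits into two phases. While the local red density $p_{i}$ remains below an absolute constant threshold, each step shrinks $|V_{i}|$ by only a modest factor, so the iteration runs for many steps and produces the bulk of the blue clique. Once $p_{i}$ first crosses the threshold---with $|V_{i}|$ still reasonably large---I finish by applying a standard balanced Ramsey bound ($R(m,m)\leqslant 4^{m}$) to the induced subcoloring of $K_{|V_{i}|}$, producing an additional monochromatic clique of size $\Omega(\log|V_{i}|)$. The sum of the two contributions should match $A\log N/(\epsilon\log(1/\epsilon))$ after choosing the threshold appropriately.

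The hardest step is the quantitative balancing. Under the naive rule of minimizing red degree, the local density can grow roughly as $p_{i}/(1-p_{i})^{2}$ per step, so by itself the iteration sustains only $O(\log(1/\epsilon))$ steps, far short of the target. Obtaining the extra factor of $\log N/\epsilon$ therefore requires either a sharper selection rule---a dependent random choice or ``double averaging'' that picks $v_{i}$ whose blue neighborhood inherits a controlled red density, not just a controlled red degree---or a more delicate amortization of the density growth over many steps, so that $|V_{i}|$ stays of order $N$ long enough for the iteration to contribute $\Omega(\log N/(\epsilon\log(1/\epsilon)))$ vertices before the Ramsey-extraction phase begins. Tuning these parameters so that the two phases combine to give precisely the $\epsilon\log(1/\epsilon)$ denominator is the crux of the proof.
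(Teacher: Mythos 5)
First, a point of reference: the paper does not prove Lemma \ref{lem2} at all --- it is imported verbatim from Erd\H{o}s and Szemer\'edi \cite{erdos1972ramsey} and used as a black box in the proof of Proposition \ref{pro1}. So there is no internal proof to compare your attempt against; your proposal has to stand on its own as a proof of the Erd\H{o}s--Szemer\'edi theorem.

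As such, it has a genuine gap, and you have correctly located it yourself in your final paragraph: the two-phase scheme you describe provably cannot reach the stated bound. In the greedy phase, the number of red edges inside $V_i$ is at most $\epsilon\binom{N}{2}$ throughout, so the local red density satisfies $p_i\lesssim\epsilon(N/\lvert V_i\rvert)^2$; this reaches a constant once $\lvert V_i\rvert$ drops to about $\sqrt{\epsilon}\,N$, which happens after only $O(1/\epsilon)$ steps no matter how the density growth is amortized (the step counts between successive halvings of $\lvert V_i\rvert$ form a convergent geometric series summing to $O(1/\epsilon)$). The endgame Ramsey extraction then adds $O(\log\lvert V_i\rvert)=O(\log N)$ vertices. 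The total $O(1/\epsilon+\log N)$ falls short of $\frac{\log N}{\epsilon\log(1/\epsilon)}$ by a factor that is unbounded as $\epsilon\to0$ (take, say, $\epsilon=1/\log N$: the target is $\Theta(\log^2N/\log\log N)$ while your scheme yields $O(\log N)$), so no choice of the absolute constant $A$ rescues it. The phrases ``dependent random choice,'' ``double averaging,'' and ``more delicate amortization'' name possible directions but supply no mechanism; yet this is exactly where the content of the theorem lies. What is missing is an argument that gains $\Theta(1/\epsilon)$ clique vertices per \emph{polynomial-in-$\epsilon$} shrinkage of the host set (so that the iteration survives for $\Theta(\log N/\log(1/\epsilon))$ rounds rather than until $\lvert V_i\rvert\approx\sqrt{\epsilon}N$) --- e.g.\ by extracting in each round an entire red-independent set of size $\Theta(1/\epsilon)$ whose common blue neighbourhood retains both a large fraction of the vertices and a controlled red density, or by running the off-diagonal Ramsey recursion $R(s,t)\leqslant\binom{s+t-2}{s-1}$ while exploiting that red neighbourhoods of typical vertices have size $O(\epsilon N)$. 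Without one of these worked out quantitatively, the proposal is a plan, not a proof.
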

Note that Lemma~\ref{lem2} was stated in the original paper without quantifiers on $N$ and $\epsilon$, but as the anonymous referee pointed out (see also~\cite{liu2026}), it is sufficient to assume that $N \geq 1/\epsilon$.

\begin{proof}[Proof of Proposition~\ref{pro1}]
Let $A>0$ be the absolute constant given in Lemma~\ref{lem2}. Let $C_{1}\geq64$ be a sufficiently large constant such that $A\sqrt{C_{1}}/8\geq1$. We do not attempt to optimize the constants in this proof. Suppose for the purpose of contradiction that $R_{\chi}(G)>C_{1}\delta{R_{\chi}(F)}$.

Let $N=R_{\chi}(G)-1$ and color $E(K_{N})$ with red and blue, such that there is no monochromatic copy of any graph from $\mathrm{Hom}(G)$. For $v,w\in{V(K_{N})}$, we say that $w$ is a \textit{red neighbor} of $v$ if the edge $vw$ is colored red, otherwise $w$ is called a \textit{blue neighbor} of $v$. Take an arbitrary vertex $v\in{V(K_{N})}$, without loss of generality assume that $v$ has at least $(N-1)/2$ red neighbors. Let $Q$ be the clique induced by all the red neighbors of $v$. Since $R_{\chi}(G)>C_{1}\delta{R_{\chi}(F)}\geq128$, we have that
\begin{equation}
\label{equ3}
\lvert{Q}\rvert\geq(N-1)/2\geq{R_{\chi}(G)/3}.
\end{equation}
Let $\epsilon=4/(C_{1}\delta)<1/2$. Now we look at the coloring of $E(Q)$.\\
\\\textbf{Case 1:} the coloring of $E(Q)$ is $\epsilon$-balanced.\\
As the number of blue edges is at least $\epsilon\lVert{Q}\rVert$, there must exist a vertex $w\in{V(Q)}$ with at least
\begin{equation*}
\frac{2\epsilon\lVert{Q}\rVert}{\lvert{Q}\rvert}=\epsilon(\lvert{Q}\rvert-1)\geq\epsilon\cdot\frac{C_{1}\delta{R_{\chi}(F)}}{4}=R_{\chi}(F)
\end{equation*}
blue neighbors. Here, we used~\eqref{equ3} and the assumption that $R_{\chi}(G)>C_{1}\delta{R_{\chi}(F)}$ to derive the above inequality. Denote $Q'$ the clique induced by all the blue neighbors of $w$ in $Q$. Since $\lvert{Q'}\rvert\geq{R_{\chi}(F)}$, there exists a monochromatic copy of some $F'\in\mathrm{Hom}(F)$ in $Q'$. Together with $v$ or $w$ we obtain a monochromatic copy of $K_{1}\vee{F'}\in\mathrm{Hom}(G)$, a contradiction.\\
\\\textbf{Case 2:} the coloring of $E(Q)$ is not $\epsilon$-balanced.\\
Recall that $\epsilon=4/(C_{1}\delta)$ and $\lvert{Q}\rvert\geq{R_{\chi}(G)/3}>(C_1\delta)/3>1/\epsilon$. Then by Lemma~\ref{lem2} there exists a monochromatic copy of $K_{n}$ in $Q$ with $n\geq A\log \lvert{Q}\rvert/(\epsilon\log(1/\epsilon))$. Thus 
\begin{equation*}
n\geq\frac{A\log{\lvert{Q}\rvert}}{\epsilon\log(1/\epsilon)}\geq\frac{A\log{R_{\chi}(G)}}{2\epsilon\log(1/\epsilon)}
\geq\frac{A\log{R_{\chi}(G)}}{\frac{8\log(C_{1}\delta)}{C_{1}\delta}}\geq\frac{A\log{R_{\chi}(G)}}{\frac{8\log{\delta}}{\sqrt{C_{1}}\delta}}=\frac{(A\sqrt{C_{1}}/8)\log{R_{\chi}(G)}}{\frac{\log{\delta}}{\delta}}\geq\chi(G).
\end{equation*}
Here, in the second inequality we used that $\lvert{Q}\rvert\geq{R_{\chi}(G)/3}\geq\sqrt{R_{\chi}(G)}$, in the fourth inequality we used that $\log(C_{1}\delta)\leq\sqrt{C_{1}}\log\delta$ holds for all $C_{1}\geq64$ and $\delta\geq2$, and in the last inequality we used that $\delta/\log{\delta}\geq\chi(G)/\log{R_{\chi}(G)}$ and $A\sqrt{C_{1}}/8\geq1$.
Since $n\geq\chi(G)$, there is a monochromatic copy of $K_{\chi(G)}$ in $Q$, which by \textbf{(P1)} is a homomorphic image of $G$, a contradiction.
\end{proof}

From Proposition~\ref{pro1} we can derive the following corollary, which shall be used in our proof of Theorem~\ref{thm4}.
\begin{corollary}
\label{cor2}
There is an absolute constant $C_{2}>0$ such that the following holds. Let $F$ be a graph containing at least one vertex and $G=K_{1}\vee{F}$. Then 
\begin{equation*}
R_{\chi}(F)+1\leq{R_{\chi}(G)}\leq{C_{2}\cdot\min\left\{\frac{\chi(G)\log{\chi(G)}}{\log{R_{\chi}(G)}},\,\left(\frac{\chi(G)}{\log{R_{\chi}(G)}}\right)^{2}\right\}}\cdot{R_{\chi}(F)}.
\end{equation*}
\end{corollary}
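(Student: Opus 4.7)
The plan is to handle the two bounds separately. For the lower bound $R_{\chi}(F)+1\leqslant R_{\chi}(G)$, I use Proposition \ref{pro2} to reduce to $R(\mathrm{Hom}(G))\geqslant R(\mathrm{Hom}(F))+1$. Starting from a $2$-edge-coloring $c$ of $K_{R_{\chi}(F)-1}$ avoiding monochromatic members of $\mathrm{Hom}(F)$, extend $c$ to a coloring $c'$ of $K_{R_{\chi}(F)}$ by adjoining a new vertex $v^{*}$ joined to every old vertex by red edges. Any red witness for $\mathrm{Hom}(G)=\mathrm{Hom}(K_{1}\vee F)$ under $c'$ corresponds to a homomorphism $\phi:K_{1}\vee F\to R'$ with $R'$ the red subgraph of $c'$; let $v_{0}$ be the apex of $K_{1}\vee F$ and $u=\phi(v_{0})$. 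If $u=v^{*}$, then $\phi|_{F}$ maps entirely into $V(K_{R_{\chi}(F)-1})$ and already realizes a red $\mathrm{Hom}(F)$ image under $c$, a contradiction. If $u\neq v^{*}$, set $X=\phi^{-1}(v^{*})$; the set $X$ is independent in $F$ because $v^{*}$ has no self-loop. Define $\psi'$ on $V(F)$ by $\psi'(x)=u$ for $x\in X$ and $\psi'(y)=\phi(y)$ otherwise; since $u$ is $c$-red-adjacent to every vertex of $N^{c}_{\text{red}}(u)$, the map $\psi'$ is a red homomorphism from $F$ into the $c$-red subgraph on $\{u\}\cup N^{c}_{\text{red}}(u)$, once more contradicting the choice of $c$. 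The blue case is immediate: $v^{*}$ has empty blue neighborhood in $c'$, so any blue witness would already exist in $c$.

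For the upper bound, apply Proposition \ref{pro1} to the pair $(G,F)$ (here $F=G-v_{0}$). Setting $a=\chi(G)/\log R_{\chi}(G)$, the hypothesis $\delta/\log\delta\geqslant a$ is satisfied, up to absolute constants, by each of
\begin{equation*}
\delta_{1}\asymp\frac{\chi(G)\log\chi(G)}{\log R_{\chi}(G)}\quad\text{and}\quad\delta_{2}\asymp\left(\frac{\chi(G)}{\log R_{\chi}(G)}\right)^{2};
\end{equation*}
the first via $\log a\leqslant\log\chi(G)$, the second via $a\geqslant 2\log a$ for $a\geqslant 2$ (the corner case $a<2$ is handled by $\delta=2$). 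Proposition \ref{pro1} then gives $R_{\chi}(G)\leqslant C_{1}\delta_{i}R_{\chi}(F)$ for each $i\in\{1,2\}$, and taking the smaller yields the claimed inequality with $C_{2}$ an absolute multiple of $C_{1}$.

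The main obstacle is the sub-case $u\neq v^{*}$ in the lower bound, where $\phi$ may send several vertices of $F$ to $v^{*}$ and one needs to reroute them into $V(K_{R_{\chi}(F)-1})$. The rescue rests on two facts: $\phi^{-1}(v^{*})$ is independent in $F$ (no self-loops), and $v^{*}$ is red-adjacent in $c'$ to every vertex of $N^{c}_{\text{red}}(u)$; together they guarantee that the retraction of $v^{*}$ onto $u$ produces a valid homomorphism into the original red subgraph under $c$.
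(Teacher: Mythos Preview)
Your argument is correct and follows the same route as the paper: the upper bound is Proposition~\ref{pro1} applied with $\delta$ of the stated order (the paper just makes the constants explicit, taking $\delta=8\cdot\min\{\cdots\}$ and $C_{2}=8C_{1}$), and the lower bound is the same retraction idea --- the paper colours the new vertex's edges arbitrarily rather than all red, but then observes that $G'-v=\{w\}\vee(F'-v)$ already contains a copy of $F'$, which is exactly your map $\psi'$.

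One remark on exposition: in your final paragraph the second ``fact'' you invoke is not the operative one. That $v^{*}$ is red-adjacent in $c'$ to every vertex of $N^{c}_{\mathrm{red}}(u)$ is trivially true (all edges at $v^{*}$ are red) and by itself does not justify replacing $v^{*}$ by $u$. What actually makes $\psi'$ a red homomorphism is that the apex $v_{0}$ is adjacent to every vertex of $F$, forcing $\phi(y)\in N^{c'}_{\mathrm{red}}(u)$ for all $y\in V(F)$; hence $\phi(y)\in N^{c}_{\mathrm{red}}(u)$ whenever $y\notin X$, and so $u\phi(y)$ is a red edge of $c$. Your earlier sentence (``into the $c$-red subgraph on $\{u\}\cup N^{c}_{\mathrm{red}}(u)$'') already uses this correctly, so the proof stands --- only the summary is misstated.
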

\begin{proof}[Proof of Corollary~\ref{cor2}]
Let
\begin{equation*}
\delta=8\cdot\min\left\{\frac{\chi(G)\log{\chi(G)}}{\log{R_{\chi}(G)}},\,\left(\frac{\chi(G)}{\log{R_{\chi}(G)}}\right)^{2}\right\}.
\end{equation*}
From~\eqref{equ1} and the upper bound on $R(k)$ by Erd\H{o}s and Szekeres~\cite{erdos1935ramsey} we have $R_{\chi}(G)\leq R(\chi(G))\leq4^{\chi(G)}$, which implies both $\frac{\chi(G)\log{\chi(G)}}{\log{R_{\chi}(G)}}\geq\frac{\log{\chi(G)}}{2}\geq1/4$ and $\left(\frac{\chi(G)}{\log{R_{\chi}(G)}}\right)^{2}\geq1/4$. Namely, $\delta\geq2$. Now we show that $\frac{\delta}{\log{\delta}}\geq\frac{\chi(G)}{\log{R_{\chi}(G)}}$ holds. If $\delta=8\cdot\frac{\chi(G)\log{\chi(G)}}{\log{R_{\chi}(G)}}$, then we have $\delta\leq8\chi(G)$ since $\chi(G)\leq{R_{\chi}(G)}$. Therefore,
\begin{equation*}
\frac{\delta}{\log{\delta}}\geq\frac{8\chi(G)\log{\chi(G)}}{\log{R_{\chi}(G)}\log\left(8\chi(G)\right)}\geq\frac{\chi(G)}{\log{R_{\chi}(G)}}.
\end{equation*}
If $\delta=8\cdot\left(\frac{\chi(G)}{\log{R_{\chi}(G)}}\right)^{2}$, then
\begin{equation*}
\frac{\delta}{\log{\delta}}\geq\sqrt{\delta}\geq\frac{\chi(G)}{\log{R_{\chi}(G)}},
\end{equation*}
where we used that $\log{\delta}\leq\sqrt{\delta}$ holds for all $\delta\geq1$. Then, by Proposition~\ref{pro1} there exists an absolute constant $C_{1}>0$ such that $R_{\chi}(G)\leq{C_{1}\delta{R_{\chi}(F)}}$. The second inequality in Corollary~\ref{cor2}, that is,
\begin{equation*}
R_{\chi}(G)\leq{C_{2}\cdot\min\left\{\frac{\chi(G)\log{\chi(G)}}{\log{R_{\chi}(G)}},\,\left(\frac{\chi(G)}{\log{R_{\chi}(G)}}\right)^{2}\right\}}\cdot{R_{\chi}(F)},
\end{equation*}
follows by letting $C_{2}=8C_{1}$.

Now we prove that $R_{\chi}(F)+1\leq{R_{\chi}(G)}$. From Proposition~\ref{pro2} we have $R_{\chi}(G)=R\left(\mathrm{Hom}(G)\right)$. It suffices to find a $2$-coloring of $E(K_{N})$ that contains no monochromatic copy of any graph from $\mathrm{Hom}(G)$, where $N=R_{\chi}(F)$. Fix an arbitrary vertex $v\in{V(K_{N})}$. Due to the value of $N$, there exists a $2$-coloring of $E(K_{N}-v)$ containing no monochromatic copy of any graph from $\mathrm{Hom}(F)$. We extend this $2$-coloring to $E(K_{N})$ by coloring the edges incident to the vertex $v$ arbitrarily. Let $G'\subseteq{K_{N}}$ be a copy of some graph from $\mathrm{Hom}(G)$. It suffices to show that $G'$ is not monochromatic. Observe that $G'$ must be the join of a single vertex $w\in{V(K_{N})}$ and some graph $F'\in\mathrm{Hom}(F)$. If $v\notin{V(G')}$ or $w=v$, then $F'\subseteq{K_{N}-v}$. According to our coloring such $F'$ is not monochromatic, thus, $G'$ is not monochromatic. If $v\in{V(G')}$ and $w\neq{v}$, then $v\in{V(F')}$ and $G'-v=\{w\}\vee(F'-v)$. Note that $G'-v$ is a subgraph of $K_{N}-v$. Since $w$ is adjacent to all vertices in $F'-v$, it follows that $G'-v$ contains a copy of $F'$ as a subgraph. However, there is no monochromatic copy of $F'$ in $K_{N}-v$, namely, $G'$ is not monochromatic.
\end{proof}

\section{Chromatic Ramsey numbers of $k$-chromatic graphs}
\label{proof1}
Recall that we have the bounds $(k-1)^{2}+1\leq{R_{\chi}(G)}\leq{R(k)}$ for every $k$-chromatic graph $G$. In order to prove Theorem~\ref{thm4}, for any given $\gamma\in\mathbb{N}$ with $(k-1)^{2}+1\leq\gamma\leq{R(k)}$, we want to construct a $k$-chromatic graph $G$, such that $R_{\chi}(G)$ is close to $\gamma$. Our idea of construction is rather natural, that is, we take the join of two graphs which achieve the upper bound and the lower bound, respectively. To this end, we shall first generalize a construction of Zhu~\cite{zhu2011fractional}.
\subsection{Generalized Zhu's graph $\boldsymbol{Z_{\ell,n}}$}
Solving a famous conjecture of Burr, Erd\H{o}s, and Lov\'{a}sz~\cite{burr1976ramsey}, for any positive integer $\ell$, Zhu~\cite{zhu2011fractional} constructed a graph with chromatic number $\ell$, whose chromatic Ramsey number achieves the lower bound $(\ell-1)^{2}+1$. Here, we generalize Zhu's construction and give the following definition.
\begin{definition}
Let $\ell,n\in\mathbb{N}$ with $\ell\leq{n}$. Let $\mathcal{F}(\ell,n)$ denote the family of all graphs $G$ satisfying $\chi_{f}(G)>\ell-1$ and $V(G)\subseteq[n]$. The generalized Zhu's graph $Z_{\ell,n}$ is defined as
\begin{equation*}
Z_{\ell,n}:=\bigotimes_{G\in\mathcal{F}(\ell,n)}G.
\end{equation*}
\end{definition}

The original construction of Zhu employs only those graphs $G$ in the product, whose number of vertices is exactly $(\ell-1)^2+1$. Recalling \textbf{(P3)}, it is not hard to see that the original Zhu's graph is a homomorphic image of any $Z_{\ell,n}$ with $n\geq(\ell-1)^{2}+1$. By slightly modifying the proof from~\cite{zhu2011fractional}, we shall see that $Z_{\ell,n}$ with $n\geq(\ell-1)^2+1$ is still an $\ell$-chromatic graph whose chromatic Ramsey number attains the lower bound. Note that \textbf{(P3)} yields that $\mathcal{F}(\ell,n)\subseteq\mathrm{Hom}\left(Z_{\ell,n}\right)$ and Proposition~\ref{pro3} implies that $\chi_{f}\left(Z_{\ell,n}\right)>\ell-1$.

\begin{lemma}
\label{lem3}
Let $\ell,n\in\mathbb{N}$ with $n\geq(\ell-1)^{2}+1$. Then we have $\chi\left(Z_{\ell,n}\right)=\ell$ and
\begin{equation*}
R_{\chi}\left(Z_{\ell,n}\right)=(\ell-1)^{2}+1.
\end{equation*}
\end{lemma}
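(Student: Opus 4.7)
The plan is to handle the two equalities $\chi(Z_{\ell,n}) = \ell$ and $R_{\chi}(Z_{\ell,n}) = (\ell-1)^{2}+1$ separately. For the chromatic number, the lower bound is essentially given in the discussion following the definition: iterating Proposition \ref{pro3} along the tensor product yields
\begin{equation*}
\chi_{f}(Z_{\ell,n}) = \min_{G \in \mathcal{F}(\ell,n)} \chi_{f}(G) > \ell - 1,
\end{equation*}
so Lemma \ref{lem1} gives $\chi(Z_{\ell,n}) \geq \chi_{f}(Z_{\ell,n}) > \ell - 1$ and hence $\chi(Z_{\ell,n}) \geq \ell$. For the matching upper bound, I would use that $K_{\ell} \in \mathcal{F}(\ell,n)$ (since $\ell \leq (\ell-1)^{2}+1 \leq n$ and $\chi_{f}(K_{\ell}) = \ell > \ell - 1$) and then iterate \textbf{(P4)} to conclude $\chi(Z_{\ell,n}) \leq \chi(K_{\ell}) = \ell$.

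Once $\chi(Z_{\ell,n}) = \ell$ is established, the bound $R_{\chi}(Z_{\ell,n}) \geq (\ell-1)^{2}+1$ follows immediately from \eqref{equ1}. For the upper bound I would invoke Proposition \ref{pro2} and prove instead that $R\bigl(\mathrm{Hom}(Z_{\ell,n})\bigr) \leq N$, where $N := (\ell-1)^{2}+1 \leq n$. Consider an arbitrary $2$-coloring of $E(K_{N})$ with red subgraph $R$ and blue subgraph $B = \overbar{R}$; note that $V(R) = V(B) = [N] \subseteq [n]$. Iterating \textbf{(P3)} together with \textbf{(P2)} gives $\mathcal{F}(\ell,n) \subseteq \mathrm{Hom}(Z_{\ell,n})$, so it suffices to show that at least one of $R,B$ has fractional chromatic number strictly greater than $\ell - 1$: that color class will itself be a monochromatic copy of a graph from $\mathrm{Hom}(Z_{\ell,n})$.

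The key step is the standard inequality
\begin{equation*}
\chi_{f}(H) \cdot \chi_{f}(\overbar{H}) \geq \lvert V(H) \rvert,
\end{equation*}
which I would derive from Lemma \ref{lem1}, giving $\chi_{f}(H) \geq \lvert V(H)\rvert / \alpha(H)$, together with $\chi_{f}(\overbar{H}) \geq \omega(\overbar{H}) = \alpha(H)$ (the latter is direct from the definition: any fractional coloring of $\overbar{H}$ must assign total weight at least $\omega(\overbar{H})$ across a maximum clique of $\overbar{H}$, since any independent set of $\overbar{H}$ meets such a clique in at most one vertex). Applying this with $H = R$ yields $\chi_{f}(R) \cdot \chi_{f}(B) \geq N > (\ell - 1)^{2}$, so $\chi_{f}(R)$ and $\chi_{f}(B)$ cannot both be $\leq \ell - 1$; the one that exceeds $\ell - 1$ belongs to $\mathcal{F}(\ell,n)$ and completes the argument. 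I do not anticipate a serious obstacle: the genuine content is packaged inside Propositions \ref{pro2} and \ref{pro3}, and the only departure from Zhu's original argument is working on vertex set $[n]$ rather than on exactly $(\ell-1)^{2}+1$ vertices, which is harmless once one checks $N \leq n$ so that the color classes honestly lie in $\mathcal{F}(\ell,n)$.
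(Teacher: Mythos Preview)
Your proof is correct and follows the same overall architecture as the paper: the chromatic-number argument (via $K_{\ell}\in\mathcal{F}(\ell,n)$, \textbf{(P4)}, Proposition~\ref{pro3}, and Lemma~\ref{lem1}) is identical, and the lower bound on $R_{\chi}$ via \eqref{equ1} is the same. The only difference is in the final step of the upper bound $R_{\chi}(Z_{\ell,n})\leqslant N$. The paper argues asymmetrically: either the red graph contains a $K_{\ell}$ (which lies in $\mathrm{Hom}(Z_{\ell,n})$ by \textbf{(P1)}), or else the blue graph $Q$ has $\alpha(Q)\leqslant\ell-1$ and hence $\chi_{f}(Q)\geqslant N/(\ell-1)>\ell-1$, placing $Q$ in $\mathcal{F}(\ell,n)$. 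You instead package this into the symmetric product inequality $\chi_{f}(R)\chi_{f}(B)\geqslant N>(\ell-1)^{2}$, using the additional easy fact $\chi_{f}(\overbar{H})\geqslant\omega(\overbar{H})=\alpha(H)$. The two arguments unwind to the same computation; your version is a touch slicker and avoids singling out $K_{\ell}$, while the paper's case split is slightly more concrete and uses only Lemma~\ref{lem1} without the $\chi_{f}\geqslant\omega$ bound.
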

\begin{proof}[Proof of Lemma~\ref{lem3}]
We first prove $\chi\left(Z_{\ell,n}\right)=\ell$. Note that we have $K_{\ell}\in\mathcal{F}(\ell,n)$ because $\chi_{f}(K_{\ell})=\ell>\ell-1$ and $V(K_{\ell})=[\ell]\subseteq[n]$. Then \textbf{(P4)} yields that $\chi\left(Z_{\ell,n}\right)\leq\chi(K_{\ell})=\ell$. On the other hand, by Lemma~\ref{lem1} and Proposition~\ref{pro3} we have
\begin{equation*}
\chi\left(Z_{\ell,n}\right)\geq\chi_{f}\left(Z_{\ell,n}\right)=\min\left\{\chi_{f}(G):\,G\in\mathcal{F}(\ell,n)\right\}>\ell-1,
\end{equation*}
implying that $\chi\left(Z_{\ell,n}\right)=\ell$. Now we prove that $R_{\chi}\left(Z_{\ell,n}\right)=(\ell-1)^{2}+1$. Since $\chi\left(Z_{\ell,n}\right)=\ell$, from~\eqref{equ1} we know already $R_{\chi}\left(Z_{\ell,n}\right)\geq(\ell-1)^{2}+1$. It remains to show $R_{\chi}\left(Z_{\ell,n}\right)\leq(\ell-1)^{2}+1$. Let $N=(\ell-1)^{2}+1$ and consider an arbitrary coloring of $E(K_{N})$ with red and blue. We shall argue that there is a monochromatic copy of a graph from $\mathrm{Hom}\left(Z_{\ell,n}\right)$. If there exists a red copy of $K_{\ell}$, then we are done. If not, then the graph $Q$ obtained from $K_{N}$ by removing all red edges has independence number at most $\ell-1$. Note that $Q$ is a blue graph and by Lemma~\ref{lem1} it follows that
\begin{equation*}
\chi_{f}(Q)\geq\frac{\lvert{Q}\rvert}{\alpha(Q)}\geq\frac{N}{\ell-1}>\ell-1.
\end{equation*}
Since $V(Q)=[(\ell-1)^{2}+1]\subseteq[n]$, we have $Q\in\mathcal{F}(\ell,n)\subseteq\mathrm{Hom}\left(Z_{\ell,n}\right)$.
\end{proof}

The main advantage of generalized Zhu's graphs is that we can easily control their homomorphic classes.
\begin{lemma}
\label{lem4}
Let $\ell,n\in\mathbb{N}$ with $\ell<n$. Then we have
\begin{equation*}
\mathrm{Hom}\left(Z_{\ell+1,n}\right)\subseteq\mathrm{Hom}\left(Z_{\ell,n}\right).
\end{equation*}
\end{lemma}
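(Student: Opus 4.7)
The plan is to use the transitivity property \textbf{(P2)}: if I can show that $Z_{\ell+1,n}\in\mathrm{Hom}(Z_{\ell,n})$, then \textbf{(P2)} immediately yields $\mathrm{Hom}(Z_{\ell+1,n})\subseteq\mathrm{Hom}(Z_{\ell,n})$. So the whole task reduces to exhibiting a single homomorphism $Z_{\ell,n}\to Z_{\ell+1,n}$.

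The key observation is that $\mathcal{F}(\ell+1,n)\subseteq\mathcal{F}(\ell,n)$: the condition $\chi_{f}(G)>\ell$ strictly implies $\chi_{f}(G)>\ell-1$, and the vertex-set requirement $V(G)\subseteq[n]$ is identical in both families. Using the associativity and commutativity of the tensor product (up to isomorphism), I would then regroup the product defining $Z_{\ell,n}$ as
\begin{equation*}
Z_{\ell,n}\;\cong\;Z_{\ell+1,n}\otimes W,\qquad\text{where}\quad W=\bigotimes_{G\in\mathcal{F}(\ell,n)\setminus\mathcal{F}(\ell+1,n)}G.
\end{equation*}
The clique $K_{\ell}$ lies in $\mathcal{F}(\ell,n)\setminus\mathcal{F}(\ell+1,n)$, since $\chi_{f}(K_{\ell})=\ell$ and, by the hypothesis $\ell<n$, its vertex set fits inside $[n]$. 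Hence this difference is non-empty and $W$ is a well-defined graph. Applying \textbf{(P3)} to the decomposition $Z_{\ell,n}\cong Z_{\ell+1,n}\otimes W$ then provides the projection homomorphism $Z_{\ell,n}\to Z_{\ell+1,n}$, i.e., $Z_{\ell+1,n}\in\mathrm{Hom}(Z_{\ell,n})$, and (P2) closes the argument.

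I do not anticipate any serious obstacle: the proof is a direct unwinding of the definition of $Z_{\ell,n}$ combined with the elementary properties \textbf{(P2)} and \textbf{(P3)}. The only mild subtlety is verifying that the decomposition $Z_{\ell,n}\cong Z_{\ell+1,n}\otimes W$ is meaningful, which is why I explicitly check that $K_{\ell}$ witnesses $\mathcal{F}(\ell,n)\setminus\mathcal{F}(\ell+1,n)\neq\emptyset$; in the degenerate case where the two families happened to coincide, the lemma would hold trivially since then $Z_{\ell,n}=Z_{\ell+1,n}$.
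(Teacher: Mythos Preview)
Your proposal is correct and follows essentially the same route as the paper: observe $\mathcal{F}(\ell+1,n)\subseteq\mathcal{F}(\ell,n)$, factor $Z_{\ell,n}$ as $Z_{\ell+1,n}\otimes W$, apply \textbf{(P3)} to get $Z_{\ell+1,n}\in\mathrm{Hom}(Z_{\ell,n})$, then \textbf{(P2)}. If anything, you are slightly more careful than the paper in explicitly exhibiting $K_{\ell}$ as a witness that $\mathcal{F}(\ell,n)\setminus\mathcal{F}(\ell+1,n)\neq\emptyset$, so that $W$ is well-defined.
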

\begin{proof}[Proof of Lemma~\ref{lem4}]
Since $\ell<n$, the families $\mathcal{F}(\ell,n)$ and $\mathcal{F}(\ell+1,n)$ are non-empty. Moreover, from the definition it follows that $\mathcal{F}(\ell+1,n)$ is a subfamily of $\mathcal{F}(\ell,n)$. Hence, we can write $Z_{\ell,n}$ as
\begin{equation*}
\bigotimes_{G\in\mathcal{F}(\ell,n)}G=\left(\bigotimes_{G\in\mathcal{F}(\ell+1,n)}G\right)\otimes\left(\bigotimes_{G\in\mathcal{F}(\ell,n)\backslash\mathcal{F}(\ell+1,n)}G\right)=Z_{\ell+1,n}\otimes\left(\bigotimes_{G\in\mathcal{F}(\ell,n)\backslash\mathcal{F}(\ell+1,n)}G\right),
\end{equation*}
which by \textbf{(P3)} implies that $Z_{\ell+1,n}\in\mathrm{Hom}\left(Z_{\ell,n}\right)$. Then, by \textbf{(P2)} we know that every homomorphic image of $Z_{\ell+1,n}$ is also a homomorphic image of $Z_{\ell,n}$, namely, $\mathrm{Hom}\left(Z_{\ell+1,n}\right)\subseteq\mathrm{Hom}\left(Z_{\ell,n}\right)$.
\end{proof}

\subsection{Proof of Theorem~\ref{thm4}}
\begin{proof}[Proof of Theorem~\ref{thm4}]
Let $C_{2}>0$ be the constant given by Corollary~\ref{cor2}. We may assume without loss of generality that $C_{2}>4$. Let $C=1/C_{2}<1/4$. Fix an integer $k\geq2$ and any $\gamma$ satisfying $(k-1)^{2}+1\leq\gamma\leq{R(k)}$. We need to show that there exists a graph $G$ with $\chi(G)=k$, such that
\begin{equation*}
C\gamma\cdot\max\left\{\frac{\log{\gamma}}{k\log{k}},\,\left(\frac{\log{\gamma}}{k}\right)^{2}\right\}<R_{\chi}(G)\leq\gamma.
\end{equation*}
Note that $C\gamma\cdot\max\left\{\frac{\log{\gamma}}{k\log{k}},\,\left(\frac{\log{\gamma}}{k}\right)^{2}\right\}$ is indeed strictly smaller than $\gamma$, because $\gamma\leq{R(k)}\leq4^{k}$ and thus $\max\left\{\frac{\log{\gamma}}{k\log{k}},\,\left(\frac{\log{\gamma}}{k}\right)^{2}\right\}\leq4$.

Let $n=k^{2}$. For $i\in[k]$ we define the graph $G_{i}:=K_{k-i}\vee{Z_{i,n}}$, in particular we have $G_{k}=Z_{k,n}$. It follows from Lemma~\ref{lem3} that $\chi(G_{i})=\chi(K_{k-i})+\chi(Z_{i,n})=k-i+i=k$ for all $i\in[k]$. Furthermore, we take $\ell\in[k]$ to be the smallest value of $i\in[k]$ satisfying $R_{\chi}(G_{i})\leq\gamma$. Such $\ell$ exists because we have $R_{\chi}(G_{k})=R_{\chi}\left(Z_{k,n}\right)=(k-1)^{2}+1\leq\gamma$. Now we show that $R_{\chi}(G_{\ell})$ is between $C\gamma\cdot\max\left\{\frac{\log{\gamma}}{k\log{k}},\,\left(\frac{\log{\gamma}}{k}\right)^{2}\right\}$ and $\gamma$. Due to the way we choose $\ell$, we already have that $R_{\chi}(G_{\ell})\leq\gamma$. It remains to prove $R_{\chi}(G_{\ell})>C\gamma\cdot\max\left\{\frac{\log{\gamma}}{k\log{k}},\,\left(\frac{\log{\gamma}}{k}\right)^{2}\right\}$.\\
\\\textbf{Case 1:} $\ell=1$.\\
Then $G_{\ell}=K_{k-1}\vee{Z_{1,n}}$ contains a copy of $K_{k}$, hence,
\begin{equation*}
R_{\chi}(G_{\ell})\geq{R_{\chi}(K_{k})}=R(k)\geq\gamma>C\gamma\cdot\max\left\{\frac{\log{\gamma}}{k\log{k}},\,\left(\frac{\log{\gamma}}{k}\right)^{2}\right\}.
\end{equation*} 
\\\textbf{Case 2:} $\ell\geq2$.\\
Then due to the minimality of $\ell$, we have
\begin{equation}
\label{equ4}
\gamma<R_{\chi}(G_{\ell-1}).
\end{equation}
Since $F:=K_{k-\ell}\vee{Z_{\ell-1,n}}$ has chromatic number $k-1\geq1$ and $G_{\ell-1}$ can be obtained by joining a vertex to $F$, applying Corollary~\ref{cor2} we have
\begin{equation}
\label{equ5}
\begin{aligned}
R_{\chi}(G_{\ell-1})&\leq{C_{2}\cdot\min\left\{\frac{\chi(G_{\ell-1})\log{\chi(G_{\ell-1})}}{\log{R_{\chi}(G_{\ell-1})}},\,\left(\frac{\chi(G_{\ell-1})}{\log{R_{\chi}(G_{\ell-1})}}\right)^{2}\right\}}\cdot{R_{\chi}(F)}\\
&\leq{C_{2}\cdot\min\left\{\frac{k\log{k}}{\log{\gamma}},\,\left(\frac{k}{\log{\gamma}}\right)^{2}\right\}}\cdot{R_{\chi}(F)}.
\end{aligned}
\end{equation}
By Lemma~\ref{lem4} it holds that $\mathrm{Hom}\left(Z_{\ell,n}\right)\subseteq\mathrm{Hom}\left(Z_{\ell-1,n}\right)$. Accordingly, we have $\mathrm{Hom}\left(K_{k-\ell}\vee{Z_{\ell,n}}\right)\subseteq\mathrm{Hom}\left(K_{k-\ell}\vee{Z_{\ell-1,n}}\right)$, which in turn implies that
\begin{equation}
\label{equ6}
R_{\chi}(F)=R_{\chi}\left(K_{k-\ell}\vee{Z_{\ell-1,n}}\right)\leq{R_{\chi}\left(K_{k-\ell}\vee{Z_{\ell,n}}\right)}=R_{\chi}(G_{\ell}).
\end{equation}
Combining~\eqref{equ4}~\eqref{equ5} and~\eqref{equ6}, we obtain
\begin{equation*}
\gamma<C_{2}\cdot\min\left\{\frac{k\log{k}}{\log{\gamma}},\,\left(\frac{k}{\log{\gamma}}\right)^{2}\right\}\cdot{R_{\chi}(G_{\ell})}.
\end{equation*}
Recall that $C=1/C_{2}$, so we have $R_{\chi}(G_{\ell})>C\gamma\cdot\max\left\{\frac{\log{\gamma}}{k\log{k}},\,\left(\frac{\log{\gamma}}{k}\right)^{2}\right\}$.
\end{proof}

\subsection{Proof of Theorem~\ref{cor1}}
\begin{proof}[Proof of Theorem~\ref{cor1}]
Assume that $k$ is sufficiently large. Let $0<C<1$ be the constant from Theorem~\ref{thm4} and $m=\lfloor\log_{64/C}2^{k/2}\rfloor=\Omega(k)$. We set $\gamma_{0}=2^{k/8}$ and $\gamma_i=(64/C)^i\gamma_0$ for each $i\in[m]$. Since $k$ is sufficiently large and $R(k)\geq2^{k/2}$~\cite{erdos1947ramsey}, it holds that \[(k-1)^{2}+1\leq\gamma_{0}<\gamma_1<\dots<\gamma_m\leq R(k).\] By Theorem~\ref{thm4} we know that for every $i\in[m]$, there is a $k$-chromatic graph $G$ with \[\gamma_{i}\geq{R_{\chi}(G)}>C\gamma_{i}\cdot\left(\frac{\log{\gamma_{i}}}{k}\right)^{2}>\frac{C\gamma_i}{64}\geq\gamma_{i-1}.\] Namely, $\left\{R_{\chi}(G):\,\chi(G)=k\right\}\cap(\gamma_{i-1},\gamma_{i}]\neq\emptyset$ for all $i\in[m]$, from which our lower bound follows.
\end{proof}

\section{Chromatic Ramsey numbers of $4$-chromatic graphs}
\label{proof3}
As aforementioned, the set $\left\{R_{\chi}(G):\,\chi(G)=k\right\}$ is fully characterized when $k\in[3]$. Now we turn our attention to the case of $k=4$ and give an idea on how to determine the chromatic Ramsey number of a given $4$-chromatic graph using computer assistance.
\subsection{The algorithm}
For $n,s,t\in\mathbb{N}$, a \textit{Ramsey$(s,t,n)$-graph} $F$ is an $n$-vertex graph with $\omega(F)<s$ and $\alpha(F)<t$. Note that the existence of a Ramsey$(s,t,n)$-graph implies that there is a coloring of $E(K_{n})$ with red and blue that contains neither a red copy of $K_{s}$ nor a blue copy of $K_{t}$. Moreover, a \textit{Ramsey$(s,t)$-graph} is a Ramsey$(s,t,n)$-graph for some $n\in\mathbb{N}$. The fundamental Ramsey’s theorem~\cite{ramsey1930logic} implies that the number of all Ramsey$(s,t)$-graphs (up to vertex relabeling) is finite. Our method here for finding chromatic Ramsey numbers of $4$-chromatic graphs makes use of the fact that not only the Ramsey number $R(4)$ is known~\cite{greenwood1955ramsey}, but also all Ramsey$(4,4)$-graphs have been determined and listed by McKay~\cite{mckay2023ramsey}.

First, we want to explain the idea behind the algorithm. Let $G$ be a graph with $\chi(G)=4$. From~\eqref{equ1} we have
\begin{equation}
\label{equ7}
10=(4-1)^{2}+1\leq{R_{\chi}(G)}\leq{R(4)}=18.
\end{equation}
Let $\mathrm{Hom}'(G)$ denote the family of minimal homomorphic images of $G$, namely,
\begin{equation*}
\mathrm{Hom}'(G):=\left\{H\in\mathrm{Hom}(G):\,H'\notin\mathrm{Hom}(G)\text{ for all proper subgraphs }H'\text{ of }H\right\}.
\end{equation*}
It is easy to see that $R\left(\mathrm{Hom}(G)\right)=R\left(\mathrm{Hom}'(G)\right)$, and thus Proposition~\ref{pro2} implies that $R_{\chi}(G)=R\left(\mathrm{Hom}'(G)\right)$. Namely, $R_{\chi}(G)$ is upper bounded by a positive integer $n$ if and only if for every $n$-vertex graph $F$ one can find a copy of a graph from $\mathrm{Hom}'(G)$ in $F$ or $\overbar{F}$. In fact, since $K_{4}\in\mathrm{Hom}'(G)$ by \textbf{(P1)}, it suffices to consider Ramsey$(4,4,n)$-graphs and check them and their complements for a copy of a graph from $\mathrm{Hom}'(G)\backslash\{K_{4}\}$. We make use of this observation and test all Ramsey$(4,4,n)$-graphs, starting from $n=17$ and then $n=16,\dots,10$. If the algorithm finds for the first time some Ramsey$(4,4,n)$-graph $F$, such that neither $F$ nor $\overbar{F}$ contains a copy of any graph from $\mathrm{Hom}'(G)\backslash\{K_{4}\}$, then we immediately obtain
\begin{equation*}
R_{\chi}(G)=n+1.
\end{equation*}
If, on the other hand, every check is successful, then by~\eqref{equ7} we conclude that $R_{\chi}(G)=10$. The advantage of this procedure stems from the fact that the number of Ramsey$(4,4,n)$-graphs is much smaller than the number of all $n$-vertex graphs. Although there are still over three million different Ramsey$(4,4,n)$-graphs for $n\in\{10,11,\dots,17\}$, we can use this algorithm to determine the exact chromatic Ramsey number of an input $4$-chromatic graph in reasonable time. Below we present the pseudocode of our algorithm.
\begin{algorithm}[H]
\caption{}\label{algorithm}
\begin{tabular}{ll}
\textbf{Input:} & a graph $G$ with $\chi(G)=4$\\
& lists $L(10),L(11),\dots,L(17)$ of Ramsey$(4,4)$-graphs on $10,11,\dots,17$ vertices\\
\textbf{Output:} & $R_{\chi}(G)\in\{10,11,\dots,18\}$
\end{tabular}
\begin{algorithmic}[1]
\For{$i\gets17$ \textbf{to} $10$}
\For{\textbf{each} $F\in{L(i)}$}
\State{$\textit{hit}\gets\text{false}$}
\For{\textbf{each} $H\in\mathrm{Hom'(G)}\backslash\{K_{4}\}$}\label{line4}
\If{\texttt{isSubgraph$(H,F)$} \textbf{or} \texttt{isSubgraph$(H,\overbar{F})$}}
\State{$\textit{hit}\gets\text{true}$}\label{line6}
\EndIf
\EndFor
\If{\textbf{not} \textit{hit}}
\State\Return{$i+1$}
\EndIf
\EndFor
\EndFor
\State\Return{$10$}
\end{algorithmic}
\end{algorithm}

There are many possibilities to make our algorithm more efficient, e.g., by utilising additional properties of the graph $G$ or by reducing the number of tests for subgraph containment (lines~\ref{line4} to~\ref{line6}). For simplicity, however, we do not delve into these details here.

The most important part of Algorithm~\ref{algorithm} is the function \texttt{isSubgraph$(H,F)$}, which determines whether $F$ contains a copy of $H$ as subgraph. If yes, then \texttt{isSubgraph$(H,F)$} returns the Boolean value true, otherwise it returns the Boolean value false. In general, this is known as the \textit{subgraph isomorphism problem}, which is NP-complete~\cite{cook1971complexity}. A na\"{i}ve way to implement \texttt{isSubgraph$(H,F)$} is to traverse every injective mapping $f:V(H)\to{V(F)}$ and check whether $f$ preserves the edges of $H$. Of course, this approach has an exponential running time and is not sensible for any practical use. Ullmann~\cite{ullmann1976subgraph} introduced a more sophisticated algorithm for the subgraph isomorphism problem, which decreases the running time immensely compared to the na\"{i}ve approach. Although the running time of Ullmann's algorithm could still be exponential in the worst case, it is good enough for our practice.

Another tricky part in Algorithm~\ref{algorithm} is to determine $\mathrm{Hom}'(G)$, which is also an NP-complete problem in general~\cite{hell2004graphs}. However, we avoided this problem by running Algorithm~\ref{algorithm} on certain small graphs, whose minimal homomorphic images can be simply determined.

\subsection{Chromatic Ramsey numbers of two small $\boldsymbol{4}$-chromatic graphs}
We now use Algorithm~\ref{algorithm} to determine the chromatic Ramsey numbers of two small $4$-chromatic graphs. Given a finite group $(V,+)$ and a subset $S\subseteq{V}$, the \textit{Cayley graph} $\Gamma(V,S)$ is a graph whose vertex set is $V$ and whose edge set is
\begin{equation*}
\left\{\{v,v+s\}:\,v\in{V},\,s\in{S}\right\}.
\end{equation*}
The first graph that we consider here is the Cayley graph $\Gamma=\Gamma\left(\mathbb{Z}_{8},\{1,2\}\right)$, see Figure~\ref{fig2}. For the second graph we consider the \textit{Moser spindle} $M$, see Figure~\ref{fig3}. It has been proved by Paul~\cite{paul2009thesis} that $11\leq{R_{\chi}(M)}\leq13$, thus, determining $R_{\chi}(M)$ surely yields a new value for the chromatic Ramsey numbers of $4$-chromatic graphs.
\begin{figure}[h]
\begin{minipage}[t]{0.49\textwidth}
\centering
\begin{tikzpicture}[scale=0.7, main_node/.style={circle,draw,minimum size=1em,inner sep=1pt]}]

\node[main_node] (0) at (-4.8, 3.3) {0};
\node[main_node] (7) at (-4.8, 0.6) {7};
\node[main_node] (6) at (-3, -0.4) {6};
\node[main_node] (1) at (-3, 4.6) {1};
\node[main_node] (2) at (1, 4.6) {2};
\node[main_node] (5) at (1, -0.4) {5};
\node[main_node] (4) at (2.8, 0.6) {4};
\node[main_node] (3) at (2.8, 3.3) {3};

 \path[draw, thick]
(0) edge node {} (1) 
(0) edge node {} (2) 
(1) edge node {} (2) 
(1) edge node {} (3) 
(2) edge node {} (3) 
(2) edge node {} (4) 
(3) edge node {} (4) 
(3) edge node {} (5)
(4) edge node {} (5) 
(4) edge node {} (6) 
(5) edge node {} (6) 
(5) edge node {} (7) 
(6) edge node {} (7) 
(6) edge node {} (0) 
(7) edge node {} (0) 
(7) edge node {} (1);
\end{tikzpicture}
\caption{$\Gamma\left(\mathbb{Z}_{8},\{1,2\}\right)$}
\label{fig2}
\end{minipage}
\hfill
\begin{minipage}[t]{0.49\textwidth}
\centering
\begin{tikzpicture}[scale=1.4,line width=0.87pt,main_node/.style={circle,draw,minimum size=0.5em,inner sep=1pt,fill=black}]

\def\phi{{asin(1/(1.7*sqrt(3)))}}
\def\rho{1.7}

\begin{scope}[xshift=-1.25cm]
  \begin{scope}[rotate=180]
    \begin{scope}[rotate=90]
      \draw (\phi:{\rho*sqrt(3)}) -- (-\phi:{\rho*sqrt(3)});
    \end{scope}
    \foreach \th in {-\phi, \phi}
    {
      \begin{scope}[rotate=\th]
        \draw ( 0, 0) -- ( 60:\rho);
        \draw ( 0, 0) -- (120:\rho);
        \draw ( 0, {\rho*sqrt(3)}) -- ( 60:\rho);
        \draw ( 0, {\rho*sqrt(3)}) -- (120:\rho);
        \draw ( 60:\rho) -- (120: \rho);

        \node[main_node] at ( 0, 0) {};
        \node[main_node]  at ( 60:\rho) {};
        \node[main_node] at (120:\rho) {};
      \end{scope}
    }
    \begin{scope}[rotate=90]            
      \node[main_node] at (\phi:{\rho*sqrt(3)}) {};
      \node[main_node] at (-\phi:{\rho*sqrt(3)}) {};
    \end{scope}

  \end{scope}
\end{scope}
\end{tikzpicture}
\caption{The Moser spindle $M$}
\label{fig3}
\end{minipage}
\end{figure}

It is straightforward that $\chi(\Gamma)=\chi(M)=4$. Moreover, since $\Gamma$ and $M$ are both small, it is not hard to obtain their minimal homomorphic images. Indeed, to determine $\mathrm{Hom}'(M)$, one can first list all homomorphic images of $M$ (up to vertex relabeling) on at most $7$ vertices by contracting the independent sets in $M$. As $\alpha(M)=2$, this gives a small collection of graphs. Then one can easily find the minimal homomorphic images in this collection. The procedure of determining $\mathrm{Hom}'(\Gamma)$ is analogous. Eventually, we have that
\begin{equation*}
\mathrm{Hom}'(\Gamma)=\left\{K_{4},\Gamma\right\}\quad\text{and}\quad\mathrm{Hom}'(M)=\left\{K_{4},W_{5},M\right\},
\end{equation*}
where $W_{5}$ is a graph obtained by joining a vertex to a cycle of length $5$.

Running Algorithm~\ref{algorithm} on these two graphs yields the following result.
\begin{theorem}
\label{thm5}
Let $\Gamma=\Gamma\left(\mathbb{Z}_{8},\{1,2\}\right)$ and $M$ be the Moser spindle. Then we have
\begin{equation*}
R_{\chi}(\Gamma)=14\quad\text{and}\quad{R_{\chi}(M)=11}.
\end{equation*}
\end{theorem}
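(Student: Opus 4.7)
The plan is to invoke Algorithm~\ref{algorithm} on each input graph using the precomputed minimal homomorphic image lists $\mathrm{Hom}'(\Gamma)=\{K_4,\Gamma\}$ and $\mathrm{Hom}'(M)=\{K_4,W_5,M\}$ (both derived above by enumerating contractions along independent sets), together with McKay's list of Ramsey$(4,4,n)$-graphs for $n\in\{10,\ldots,17\}$. Since any Ramsey$(4,4)$-graph and its complement both avoid $K_4$, the $K_4$ member of each $\mathrm{Hom}'$ can be dropped from the search; only the non-clique patterns need to be tested, in line with Proposition~\ref{pro2} and the discussion preceding the pseudocode.

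For $\Gamma$, the algorithm iterates $n$ from $17$ down to $10$ and, for each Ramsey$(4,4,n)$-graph $F$, invokes a subgraph-isomorphism routine (Ullmann's procedure) to decide whether $F$ or $\overline{F}$ contains a copy of $\Gamma$. Establishing $R_\chi(\Gamma)=14$ then requires two outcomes: (i) some Ramsey$(4,4,13)$-graph $F_0$ must be found such that neither $F_0$ nor $\overline{F_0}$ contains $\Gamma$, giving $R_\chi(\Gamma)\geq 14$; and (ii) every Ramsey$(4,4,n)$-graph with $n\in\{14,15,16,17\}$ must satisfy $\Gamma\subseteq F$ or $\Gamma\subseteq\overline{F}$, giving $R_\chi(\Gamma)\leq 14$. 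The very same template, with the non-clique test list replaced by $\{W_5,M\}$, is executed for the Moser spindle: here one must exhibit a Ramsey$(4,4,10)$-graph $F_1$ in whose edges and non-edges neither $W_5$ nor $M$ appears, and confirm that no Ramsey$(4,4,n)$-graph with $n\geq 11$ shares this property. Paul's bound already yields $R_\chi(M)\geq 11$, so strictly only the upper half of the $M$-verification is needed, but the algorithm reports both halves uniformly.

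The main obstacle is computational rather than structural: several million subgraph-isomorphism queries must be decided, and subgraph isomorphism is NP-complete in general. Three features make the run feasible in practice. McKay's catalog removes the need to generate hosts from scratch. Each query tests a fixed small pattern (on $6$, $7$, or $8$ vertices) inside a host on at most $17$ vertices, a regime in which Ullmann's backtracking with degree- and neighborhood-based pruning is typically very fast. Finally, the inner loops break on the first positive detection, so expensive exhaustive failures are restricted to the relatively few Ramsey graphs that genuinely avoid all non-clique patterns. Running the two instances, as reported by the experiment, produces $R_\chi(\Gamma)=14$ and $R_\chi(M)=11$.
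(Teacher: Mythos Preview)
Your proposal is correct and matches the paper's approach exactly: the paper's ``proof'' consists of computing $\mathrm{Hom}'(\Gamma)=\{K_4,\Gamma\}$ and $\mathrm{Hom}'(M)=\{K_4,W_5,M\}$ and then running Algorithm~\ref{algorithm} against McKay's Ramsey$(4,4)$ catalog, just as you describe. You have spelled out the two directions of each equality and the practical feasibility considerations in slightly more detail than the paper does, but the method is identical.
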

Theorem~\ref{thm5} finds a new $4$-chromatic graph with chromatic Ramsey number $14$, see also~\cite{paul2012ramsey,tardif2023ramsey} for more such graphs. In addition, Theorem~\ref{thm5} reveals a new value $11$ for the chromatic Ramsey numbers of $4$-chromatic graphs and hence implies Theorem~\ref{thm3}.

\section{Sparse graphs attaining the upper bound}
\label{proof2}
In this section, we prove Theorem~\ref{thm2} by constructing $k$-chromatic graphs with arbitrarily large girth and chromatic Ramsey number $R(k)$.
\subsection{An iterative construction}
Our construction here is inspired by that of Tutte (alias Blanche Descartes)~\cite{descartes1947girth} and uses a result of Erd\H{o}s and Hajnal~\cite{erdos1966chromatic} on hypergraphs. For $r\in\mathbb{N}_{>1}$, an \textit{$r$-uniform hypergraph} is an ordered pair $\mathcal{H}=(V,\mathcal{E})$, where $V$ is a finite set and $\mathcal{E}\subseteq\binom{V}{r}$. The elements of $V$ and $\mathcal{E}$ are referred to as the vertices and the hyperedges of $\mathcal{H}$, respectively. Similar to the case of graphs, we denote $\lvert\mathcal{H}\rvert:=\lvert{V}\rvert$ and $\lVert\mathcal{H}\rVert:=\lvert\mathcal{E}\rvert$.

The \textit{chromatic number} $\chi(\mathcal{H})$ of a hypergraph $\mathcal{H}$ is the smallest integer $k$, such that one can color the vertices of $\mathcal{H}$ with $k$ colors so that every hyperedge of $\mathcal{H}$ contains two vertices of distinct colors. A \textit{cycle} of length $s\in\mathbb{N}_{>1}$ in $\mathcal{H}$ is an alternating sequence
\begin{equation*}
\left(v_{0},E_{0},v_{1},E_{1},\dots,v_{s-1},E_{s-1},v_{s}:=v_{0}\right)
\end{equation*}
of distinct vertices $v_{0},\dots,v_{s-1}\in{V}$ and distinct hyperedges $E_{0},\dots,E_{s-1}\in\mathcal{E}$, such that $v_{i-1},v_{i}\in{E_{i-1}}$ for all $i\in[s]$. If we allow repeated hyperedges, then such an alternating sequence is called a \textit{hyper-cycle}. It is a simple observation that every hyper-cycle with at least two distinct hyperedges contains a subsequence that is a cycle. The \textit{girth} $\textsl{g}(\mathcal{H})$ is the length of a shortest cycle in $\mathcal{H}$. Note that if $\mathcal{H}$ contains no cycle, then $\textsl{g}(\mathcal{H})=\infty$.

\begin{proposition}[Erd\H{o}s--Hajnal~\cite{erdos1966chromatic}]
\label{pro4}
For every $k,g\in\mathbb{N}$ and $r\in\mathbb{N}_{>1}$ there exists an $r$-uniform hypergraph $\mathcal{H}$ with $\chi(\mathcal{H})\geq{k}$ and $\textsl{g}(\mathcal{H})\geq{g}$.   
\end{proposition}

Let $g,m\in\mathbb{N}$. We iteratively construct the graph $T_{i}=T_{i}(g,m)$ for $i\in\mathbb{N}$. To begin with, we let $T_{1}$ be an empty graph on two vertices. For $i>1$, by Proposition~\ref{pro4} there exists a $\lvert{T_{i-1}}\rvert$-uniform hypergraph $\mathcal{H}_{i}$ such that $\chi(\mathcal{H}_{i})\geq{m}$ and $\textsl{g}(\mathcal{H}_{i})\geq{g}$. Now we construct $T_{i}$ as follows. First we take an independent set $U_{i}$ of $\lvert\mathcal{H}_{i}\rvert$ vertices and $\lVert\mathcal{H}_{i}\rVert$ pairwise vertex-disjoint copies of $T_{i-1}$, each of which is also vertex-disjoint from $U_{i}$. Then, we connect every copy of $T_{i-1}$ with a distinct $\lvert{T_{i-1}}\rvert$-element subset of $U_{i}$ that corresponds to a hyperedge of $\mathcal{H}_{i}$ via a perfect matching, see Figure~\ref{fig1}.

\begin{figure}[H]
\centering
\begin{tikzpicture}[x=0.7pt, y=0.7pt, yscale=-1, xscale=1, main_node/.style={circle,draw,minimum size=0.3em,inner sep=0.1pt,fill=black}]
\node at (135,73){$T_{i-1}$};
\draw   (100,80) .. controls (100,68.95) and (115.67,60) .. (135,60) .. controls (154.33,60) and (170,68.95) .. (170,80) .. controls (170,91.05) and (154.33,100) .. (135,100) .. controls (115.67,100) and (100,91.05) .. (100,80) -- cycle;
\node at (235,73){$T_{i-1}$};
\draw   (200,80) .. controls (200,68.95) and (215.67,60) .. (235,60) .. controls (254.33,60) and (270,68.95) .. (270,80) .. controls (270,91.05) and (254.33,100) .. (235,100) .. controls (215.67,100) and (200,91.05) .. (200,80) -- cycle;
\node at (335,83){$\dots$};
\node at (435,73){$T_{i-1}$};
\draw   (400,80) .. controls (400,68.95) and (415.67,60) .. (435,60) .. controls (454.33,60) and (470,68.95) .. (470,80) .. controls (470,91.05) and (454.33,100) .. (435,100) .. controls (415.67,100) and (400,91.05) .. (400,80) -- cycle;
\node at (260,160){\Large$\dots$};
\node at (300,160){\Large$U_{i}$};
\node at (340,160){\Large$\dots$};
\draw   (100,148) .. controls (100,143.58) and (103.58,140) .. (108,140) -- (466,140) .. controls (470.42,140) and (474,143.58) .. (474,148) -- (474,172) .. controls (474,176.42) and (470.42,180) .. (466,180) -- (108,180) .. controls (103.58,180) and (100,176.42) .. (100,172) -- cycle;
\draw    (110,83) -- (110,160);
\draw    (120,83) -- (140,160);
\node at (135,83){$\dots$};
\draw    (150,83) -- (170,160);
\draw    (210,83) -- (140,160);
\draw    (220,83) -- (170,160);
\node at (235,83){$\dots$};
\draw    (250,83) -- (230,160);
\draw    (410,83) -- (390,160);
\draw    (420,83) -- (410,160);
\node at (435,83){$\dots$};
\draw    (450,83) -- (450,160);
\node[main_node] at (110,83) {};
\node[main_node] at (110,160) {};
\node[main_node] at (120,83) {};
\node[main_node] at (140,160) {};
\node[main_node] at (150,83) {};
\node[main_node] at (170,160) {};
\node[main_node] at (210,83) {};
\node[main_node] at (220,83) {};
\node[main_node] at (250,83) {};
\node[main_node] at (230,160) {};
\node[main_node] at (410,83) {};
\node[main_node] at (390,160) {};
\node[main_node] at (420,83) {};
\node[main_node] at (410,160) {};
\node[main_node] at (450,83) {};
\node[main_node] at (450,160) {};
\end{tikzpicture}
\caption{The constructed graph $T_{i}$}
\label{fig1}
\end{figure}

We continue by pointing out some important properties of the graph $T_{i}$.
\begin{lemma}
\label{lem5}
Let $k,g,m\in\mathbb{N}$ with $m\geq{k}$ and $T_{i}=T_{i}(g,m)$ be defined as above with $i\in[k]$. Then we have $\chi(T_{i})=i$ and $\textsl{g}(T_{i})\geq3g$.
\end{lemma}
\begin{proof}[Proof of Lemma~\ref{lem5}]
We prove this by induction on $i\in[k]$. Since $T_{1}$ is an empty graph with two vertices, we have $\chi(T_{1})=1$ and $\textsl{g}(T_{i})=\infty$. Namely, the assertion holds for $i=1$.

For $i>1$, assume that $\chi(T_{i-1})=i-1$ and $\textsl{g}(T_{i-1})\geq3g$. Let $\mathcal{H}_{i}$ be the hypergraph which we use to construct $T_{i}$, in particular $\chi(\mathcal{H}_{i})\geq{m}$ and $\textsl{g}(\mathcal{H}_{i})\geq{g}$. Recall that $T_{i}$ consists of $\lVert\mathcal{H}_{i}\rVert$ vertex-disjoint copies of $T_{i-1}$ and an independent set $U_{i}$ of $\lvert\mathcal{H}_{i}\rvert$ vertices. As $\chi(T_{i-1})=i-1$, we can properly color the vertices of these copies of $T_{i-1}$ with $i-1$ colors and then color $U_{i}$ with the $i$th color. This gives us $\chi(T_{i})\leq{i}$. On the other hand, suppose one can properly color $V(T_{i})$ with $i-1$ colors. Since $\chi(\mathcal{H}_{i})\geq{m}\geq{k}>i-1$, there exists a $\lvert{T_{i-1}}\rvert$-element subset $U'\subseteq{U_{i}}$ which corresponds to a hyperedge of $\mathcal{H}_{i}$, such that all vertices in this subset obtain the same color. Let $T'$ be the copy of $T_{i-1}$ that is matched with $U'$ in $T_{i}$. Then the vertices of $T'$ must be properly colored with at most $i-2$ colors, which implies that $\chi(T_{i-1})\leq{i-2}$, a contradiction to our induction hypothesis.

It remains to show that $\textsl{g}(T_{i})\geq3g$. Fix an arbitrary cycle $W$ in $T_{i}$. Either the vertices of $W$ are completely contained in some copy of $T_{i-1}$, or some of its vertices are in $U_{i}$. In the former case, the length of $W$ is at least $\textsl{g}(T_{i-1})\geq3g$. In the latter case, let $X\dot\cup{Y}$ be the vertex set of $W$, where $X\subseteq{U_{i}}$ and $Y\subseteq{V(T_{i})\backslash{U_{i}}}$. It is easy to see that $\lvert{X}\rvert\geq2$, as the neighborhood of any $x\in{X}$ is an independent set in $T_{i}$. Relabel the elements of $X$ as $\left\{x_{1},\dots,x_{\lvert{X}\rvert}\right\}$ and let $x_{0}:=x_{\lvert{X}\rvert}$, so that as the cycle $W$ is traversed, the elements of $X$ appear in the order $x_1,\dots,x_{\lvert{X}\rvert}$. Since $X$ is an independent set in $T_{i}$, for every $j\in[\lvert{X}\rvert]$, $x_{j-1},x_{j}$ in $W$ must be connected via the matching edges and the edges inside some copy of $T_{i-1}$, implying that there exists a $\lvert{T_{i-1}}\rvert$-element subset of $U_{i}$ containing $x_{j-1},x_{j}$ which corresponds to a hyperedge of $\mathcal{H}_{i}$. Moreover, $x_{0},x_{1}$ and $x_{1},x_{2}$ in $W$ must be connected via two different copies of $T_{i-1}$. Therefore, $X$ corresponds to the vertex set of a hyper-cycle in $\mathcal{H}_{i}$ with at least two distinct hyperedges. Recall that every hyper-cycle with at least two distinct hyperedges contains a cycle. Namely, $\lvert{X}\rvert\geq\textsl{g}(\mathcal{H}_{i})\geq{g}$. Note that every vertex $x\in{X}$ is followed in $W$ by at least two vertices from $Y$. Hence, we have $\lvert{Y}\rvert\geq2\lvert{X}\rvert\geq2g$, completing the proof.
\end{proof}

\begin{lemma}
\label{lem6}
Let $k,g,m\in\mathbb{N}$ with $m\geq{k}$ and $T_{i}=T_{i}(g,m)$ with $i\in[k]$. Then for any $F\in\mathrm{Hom}(T_{i})$, either $F$ contains a copy of $K_{i}$ or $F$ has at least $m$ vertices.
\end{lemma}
\begin{proof}[Proof of Lemma~\ref{lem6}]
We shall apply induction on $i\in[k]$. As $T_{1}$ is a graph on two vertices, every $F\in\mathrm{Hom}(T_{1})$ contains at least one vertex, i.e., a copy of $K_{1}$. Thus, the assertion holds for $i=1$.

For $i>1$, assume that every homomorphic image of $T_{i-1}$ either contains a copy of $K_{i-1}$ or has at least $m$ vertices. Now fix an arbitrary $F\in\mathrm{Hom}(T_{i})$ and suppose $\lvert{F}\rvert<m$. We shall argue that such $F$ must contain a copy of $K_{i}$. Let $\mathcal{H}_{i}$ and $U_{i}$ be the hypergraph and the corresponding independent set which we use to construct $T_{i}$. Let $f:V(T_{i})\to{V(F)}$ be a homomorphism from $T_{i}$ to $F$. Since $f$ maps any two adjacent vertices to distinct vertices of $F$, $f$ naturally defines a proper coloring of $V(T_{i})$ with $\lvert{F}\rvert<m$ colors. Recall that $\chi(\mathcal{H}_{i})\geq{m}$, namely, there exists a $\lvert{T_{i-1}}\rvert$-element subset $U'\subseteq{U_{i}}$ which corresponds to a hyperedge of $\mathcal{H}_{i}$, such that $f$ maps $U'$ to a single vertex $v\in{V(F)}$. Let $T'$ be the copy of $T_{i-1}$ that is matched with $U'$ in $T_{i}$. Observe that $f$ maps $T'$ to a subgraph $F'\subseteq{F}$, moreover, $F'\in\mathrm{Hom}(T_{i-1})$. Then, as $\lvert{F'}\rvert\leq\lvert{F}\rvert<m$, $F'$ contains a copy of $K_{i-1}$ by our induction hypothesis. Since $T'$ is connected with $U'$ via a perfect matching, $v$ is the common neighbor of all vertices of $F'$. Namely, $F$ contains a copy of $K_{i}$.
\end{proof}
\subsection{Proof of Theorem~\ref{thm2}}
\begin{proof}[Proof of Theorem~\ref{thm2}]
Given any $k,g\in\mathbb{N}$, we want to prove that there is a graph $G$ with $\chi(G)=k$ and $\textsl{g}(G)\geq{g}$, such that $R_{\chi}(G)=R(k)$. To this end, we let $m=R(k)$ and $G:=T_{k}(g,m)$. Then by Lemma~\ref{lem5} we have $\chi(G)=k$ and $\textsl{g}(G)\geq3g$. It remains to show that $R_{\chi}(G)=R(k)$.

Since $\chi(G)=k$, it follows from~\eqref{equ1} that $R_{\chi}(G)\leq{R(k)}$. On the other hand, let $N=R(k)-1$. There exists a $2$-coloring of $E(K_{N})$ avoiding a monochromatic copy of $K_{k}$. By Lemma~\ref{lem6}, every homomorphic image of $G$ either contains a copy of $K_{k}$ or has at least $m>N$ vertices. Accordingly, under this $2$-coloring $K_{N}$ contains no monochromatic copy of any $F\in\mathrm{Hom}(G)$. Therefore, we have $R_{\chi}(G)>N=R(k)-1$.
\end{proof}

\section{Concluding remarks and open problems}
\label{conclusions}
In Theorem~\ref{thm4} we showed that there are many graphs of the same chromatic number, whose chromatic Ramsey numbers are distinct. An alternative way to phrase Theorem~\ref{thm4} is that there is an absolute constant $C>0$, such that for any function $f:\mathbb{N}\to\mathbb{R}_{\geq0}$ with $(k-1)^{2}+1\leq{f(k)}\leq{R(k)}$, we can find a sequence $(G_{k})_{k\in\mathbb{N}}$ of graphs with $\chi(G_{k})=k$ and
\begin{equation}
\label{equ8}
Cf(k)\cdot\max\left\{\frac{\log{f(k)}}{k\log{k}},\,\left(\frac{\log{f(k)}}{k}\right)^{2}\right\}<R_{\chi}(G_{k})\leq{f(k)}.
\end{equation}
This parallels a result of Pavez-Sign\'{e}, Piga, and Sanhueza-Matamala~\cite{pavez2023ramsey} regarding the classical Ramsey numbers. They showed that for any non-decreasing function $f:\mathbb{N}\to\mathbb{R}_{\geq0}$ with $k\leq{f(k)}\leq{R(k)}$, there is a sequence $(G_{k})_{k\in\mathbb{N}}$ of graphs such that $\lvert{G_{k}}\rvert=k$ and $Cf(k)\leq{R(G_{k})}\leq{f(k)}$, where $C>0$ is an absolute constant.

This ``redundant'' term $\max\left\{\frac{\log{f(k)}}{k\log{k}},\,\left(\frac{\log{f(k)}}{k}\right)^{2}\right\}$ in~\eqref{equ8} arises from Proposition~\ref{pro1}. Let $G$ be a non-empty graph and $F$ be obtained by deleting a vertex from $G$. Intuitively, one would expect that $R_{\chi}(F)$ is at least a constant fraction of $R_{\chi}(G)$. However, to the best of our knowledge there are no other results in this direction. A conjecture of Conlon, Fox, and Sudakov~\cite[Conjecture 5.1]{conlon2020short} asserted that $R(F)\geq\Omega\left(R(G)\right)$. Although this conjecture was recently refuted by Wigderson~\cite{wigderson2024ramsey}, it is not clear whether its analogue for chromatic Ramsey numbers could be true.

We hereby raise the following two questions. First, from~\eqref{equ1} we may deduce that the integers $3,4,7,8,9$ cannot be realized as chromatic Ramsey numbers. It is then natural to ask:
\begin{question}
Are there other positive integers not realizable as chromatic Ramsey numbers? Are there infinitely many of them?
\end{question}
Theorem~\ref{cor1} states that there are at least $\Omega(k)$ distinct values for chromatic Ramsey numbers of $k$-chromatic graphs. However, there is a large gap between the obtained lower bound $\Omega(k)$ and the trivial upper bound $R(k)-(k-1)^{2}$. This motivates our second question:
\begin{question}
What is the order of magnitude of $\,\left\lvert\left\{R_{\chi}(G):\,\chi(G)=k\right\}\right\rvert$?
\end{question}

The extremal function $\mathrm{ex}^{(2)}(n,G)$ that we consider, could be naturally generalized to the multicolor case with several forbidden monochromatic graphs. For $r\in\mathbb{N}$ and fixed graphs $G_{1},\dots,G_{r}$, the function $\mathrm{ex}^{(r)}(n;G_{1},\dots,G_{r})$ is defined as the largest number of edges in an $n$-vertex graph whose edges can be colored with $r$ colors, such that there is no monochromatic copy of $G_{i}$ with the $i$-th color for all $i\in[r]$. Similarly, one can define $\pi^{(r)}(G_{1},\dots,G_{r}):=\lim_{n\to\infty}\mathrm{ex}^{(r)}(n;G_{1},\dots,G_{r})/\binom{n}{2}$. As demonstrated by Hancock, Staden, and Treglown~\cite[Section 5.3]{hancock2019ramsey}, $\pi^{(r)}(G_{1},\dots,G_{r})$ is expressible in terms of $R_{\chi}(G_{1},\dots,G_{r})$, which is the smallest integer $N$ such that there is a graph $F$ with $\chi(F)=N$, so that any $r$-coloring of $E(F)$ results in a monochromatic copy of $G_{i}$ with the $i$-th color for some $i\in[r]$. Therefore, the study of $\pi^{(r)}(G_{1},\dots,G_{r})$ again focuses on understanding the multicolor chromatic Ramsey numbers.

It is worth mentioning that Keevash, Saks, Sudakov, and Verstra\"{e}te~\cite{keevash2004turan} introduced a different notion of multicolor Tur\'{a}n numbers. They define the function $\mathrm{ex}_{r}(n,G)$ to be the largest number of edges in an $n$-vertex multigraph whose edges can be colored with $r$ colors avoiding a rainbow copy of $G$, i.e., a copy of $G$ whose edges all have distinct colors. The authors of~\cite{keevash2004turan} showed that when $r$ is sufficiently large, an optimal construction would be taking $r$ copies of a fixed extremal $G$-free graph. See also~\cite{chakraborti2024rainbow,li2024turan} for various results.

Lastly, we note that Liu, Pikhurko, and Sharifzadeh~\cite{liu2019edges} considered a similarly sounding function $\mathrm{nim}(n,G)$, which is the largest number of edges in $K_{n}$ not contained in any monochromatic copy of $G$ over all $2$-colorings of $E(K_{n})$. Nevertheless, $\mathrm{nim}(n,G)$ and $\mathrm{ex}^{(2)}(n,G)$ are really two different functions. Already in the case of cliques, Keevash and Sudakov~\cite{keevash2004edges} extended an earlier result of Erd\H{o}s~\cite{erdos1997graph} and Pyber~\cite{pyber1986clique} and proved that $\mathrm{nim}(n,K_{k+1})=\mathrm{ex}(n,K_{k+1})=t(n,k)$ for sufficiently large $n$, where $t(n,k)$ denotes the number of edges in the Tur\'{a}n graph $T(n,k)$. On the other hand, Bialostocki, Caro, and Roditty~\cite{bialo1990zero} showed that $\mathrm{ex}^{(2)}(n,K_{k+1})=\left(1+o(1)\right)t\left(n,R(k+1)-1\right)$.\\

\noindent\textbf{Acknowledgements.} The research was partially supported by the DFG grant FKZ AX 93/2-1. The authors thank the anonymous referees for their careful reading and invaluable comments, which have improved the presentation of this paper.

\end{document}